
\title{Pwer sum of element ordrs}
\author{hiranya.dey }
\date{August 2022}

\documentclass[preprint,12pt]{elsarticle}




\usepackage{amssymb}
\usepackage{amsthm}
\usepackage{graphics}
\usepackage{epsfig}
\usepackage{amssymb}
\usepackage{graphicx}
\usepackage{subfigure}
\usepackage{color}
\usepackage{tikz}
\usepackage{tikz,pgfplots}
\pgfplotsset{compat=1.12,axis lines=center}
\usepgfplotslibrary{fillbetween}
\usetikzlibrary{patterns}
\usepackage{hyperref}
\usepackage{a4wide}
\usepackage{amsmath}
\usepackage{amsfonts}
\usepackage{enumerate}
\usepackage{multicol}


\newcommand{\Z}{\mathbb{Z}}

\hypersetup{
    colorlinks=true,
    linkcolor=blue,
    filecolor=dviolet,      
    urlcolor=blue,
} 

\newtheorem*{theoremaux}{Theorem \theoremauxnum}
\gdef\theoremauxnum{1}

\newtheorem{lemma}{\bf Lemma}[section]
 \newtheorem{example}{\bf Example}[section]
\newtheorem{theorem}{\bf Theorem}[section]

\newtheorem{corollary}[lemma]{\bf Corollary}

\newtheorem{remark}{\bf Remark}[section]






\journal{~}

\begin{document}

\begin{frontmatter}



\title{An exact upper bound for the sum of powers of element
orders in non-cyclic finite groups}



\author{Hiranya Kishore Dey}
\ead{hiranya.dey@gmail.com} 
\address{Department of Mathematics,\\ 
Indian Institute of Science, Bangalore \\ India} 

\author{Archita Mondal}
\ead{architamondal40@gmail.com} 
\address{Department of Mathematics,\\ 
Indian Institute of Technology, Bombay 
\\ India}

%


\begin{abstract}

\noindent 
For a finite group $G$,  let $\psi(G)$ denote the sum of element orders of $G$. This function was introduced by Amiri, Amiri, and Isaacs in 2009 and they proved that for any finite group $G$ of order $n$, $\psi(G)$ is maximum if and only if $G \simeq \mathbb{Z}_n$ where  $\mathbb{Z}_n$ denotes the cyclic group of order $n$.  Furthermore, Herzog, Longobardi, and Maj in 2018 proved that if $G$ is non-cyclic, $\psi(G) \leq \frac{7}{11} \psi(\mathbb{Z}_n)$. 
Amiri and Amiri in 2014 introduced the function $\psi_k(G)$ which is defined as the sum of the $k$-th powers of element orders of $G$ and they showed that for every positive integer $k$,
$\psi_k(G)$ is also maximum if and only if $G$ is cyclic.

\medskip

\noindent 
In this paper, we have been able to prove that if $G$ is a non-cyclic group of order $n$, then $\psi_k(G) \leq \frac{1+3.2^k}{1+2.4^k+2^k} \psi_k(\mathbb{Z}_n)$. Setting $k=1$ in our result, we immediately get the result of Herzog et al. as a simple corollary. Besides, a recursive formula for $\psi_k(G)$ is also obtained for finite abelian $p$-groups $G$, using which one can explicitly find out the exact value of $\psi_k(G)$ for finite abelian groups $G$. 

\end{abstract}

\begin{keyword}
  sum of element orders \sep  sum of powers of element orders \sep nilpotent groups \sep direct product and semi-direct product of groups
  
  \medskip 
  
\MSC[2020] 20D60 \sep 20E34 \sep 20F18

\end{keyword}

\end{frontmatter}

\section{Introduction}
\label{sec:intro}

Let $G$ be a finite group. Amiri et al. in \cite{amiri-communication} defined the following function:
$$\psi(G)= \sum_{g \in G} o(g).$$ 
where $o(g)$ denotes the order of the element $g$.  They were able to prove the following:

\begin{theorem}
\label{thm:amiri-cia} 
For any finite group $G$ of order $n$, $\psi(G) \leq \psi(\mathbb{Z}_n)$ and equality holds if and only if $G \simeq \mathbb{Z}_n,$ where $\mathbb{Z}_n$ denotes the cyclic group of order $n$.
\end{theorem}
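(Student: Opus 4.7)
The plan is to argue by strong induction on $n = |G|$, with the base case $n = 1$ being trivial. For the inductive step, I would assume the bound for every group of order less than $n$, take $G$ to be a non-cyclic group of order $n$, and aim to prove the strict inequality $\psi(G) < \psi(\mathbb{Z}_n)$.

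The first move is a reformulation. Grouping the elements of $G$ by the cyclic subgroup they generate (each cyclic subgroup of order $d$ contains exactly $\phi(d)$ elements of order $d$) gives
\[
\psi(G) \;=\; \sum_{d \mid n} d\,\phi(d)\,c_d(G),
\]
where $c_d(G)$ is the number of cyclic subgroups of $G$ of order $d$. Since $c_d(\mathbb{Z}_n) = 1$ for every divisor $d$, we have $\psi(\mathbb{Z}_n) = \sum_{d \mid n} d\,\phi(d)$, and the theorem reduces to
\[
\sum_{d \mid n} d\,\phi(d)\bigl(c_d(G) - 1\bigr) \;<\; 0.
\]
Non-cyclicity of $G$ forces $c_n(G) = 0$, immediately supplying a negative contribution of $-n\,\phi(n)$; the entire substance of the theorem is that this term dominates whatever surplus arises from $c_d(G) \geq 2$ at proper divisors $d$.

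To control those surpluses I would invoke Frobenius's theorem: for every $d \mid n$, the quantity $f_G(d) := |\{g \in G : g^d = 1\}|$ is a positive multiple of $d$, while $f_{\mathbb{Z}_n}(d) = d$ exactly. A partial summation over the divisor lattice rewrites $\psi(G)$ as a weighted sum of the $f_G(d)$, so that the comparison of $\psi(G)$ with $\psi(\mathbb{Z}_n)$ reduces to a pointwise comparison $f_G(d) \geq f_{\mathbb{Z}_n}(d)$ together with a strict inequality at $d = n$ (where $f_G(n) = n$ for both groups but the weight of the cyclic-case generators is maximal only when $G$ is cyclic).

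The main obstacle is the inductive passage. I would pick a maximal subgroup $M \leq G$ and decompose $\psi(G) = \psi(M) + \sum_{g \in G \setminus M} o(g)$. When $M$ is non-cyclic of order $m$, the inductive hypothesis supplies $\psi(M) < \psi(\mathbb{Z}_m)$ and one must absorb the remaining tail sum using the fact that every element of $G \setminus M$ has order bounded by properties of $[G:M]$. The harder sub-case is when every maximal subgroup of $G$ is cyclic: here the structure of $G$ is highly restricted (for instance $p$-groups with a cyclic maximal subgroup, or groups with a unique minimal normal subgroup), and I would handle these by direct structural verification. The delicate quantitative step throughout is to show that the aggregate surplus $\sum_{d < n,\, d \mid n} d\,\phi(d)(c_d(G) - 1)$ cannot rival $n\,\phi(n)$; this is where the Frobenius divisibility constraint on the $f_G(d)$ must be used most carefully, since without it the surplus counts could in principle grow unchecked.
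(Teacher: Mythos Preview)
The paper does not prove this theorem: it is quoted from \cite{amiri-communication} as known background, and its $k$-th power generalization (Theorem~\ref{thm:Amiri_kth_gen}) is used as a black box inside the proof of the main Theorem~\ref{thm:711generalization}. So there is no in-paper proof to compare against. For context, the argument in \cite{amiri-communication} hinges on the inequality $\psi(G)\le\psi(P)\,\psi(G/P)$ for a cyclic normal Sylow subgroup $P$ (this is exactly the $k=1$ case of Lemma~\ref{lem:analogue_cia} here, and Lemma~\ref{lem:implicit_communication} is extracted from that same source), combined with an induction on $|G|$; that mechanism is different from anything in your sketch.

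Independently of that, your proposal has a genuine gap. The ``partial summation'' step does not do what you claim: if one M\"obius-inverts $f_G(d)=\sum_{e\mid d}a_e$ (with $a_e$ the number of elements of order $e$) and substitutes into $\psi(G)=\sum_e e\,a_e$, the resulting weight on $f_G(d)$ is
\[
d\sum_{m\mid n/d} m\,\mu(m)\;=\;d\prod_{p\mid n/d}(1-p),
\]
whose sign alternates with the number of prime divisors of $n/d$. Hence the Frobenius inequality $f_G(d)\ge d$ cannot be applied pointwise, and the asserted reduction to ``the single negative term $-n\phi(n)$ dominating the surplus'' is unsupported. Your fallback induction through a maximal subgroup $M$ is likewise only heuristic: the bound $\psi(M)<\psi(\mathbb Z_{|M|})$ gives no control over $\sum_{g\in G\setminus M}o(g)$, and nothing about $[G:M]$ alone bounds the orders of elements outside $M$. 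The missing device is precisely the one that makes the induction multiplicative --- a cyclic normal Sylow subgroup --- and that is the heart of the Amiri--Jafarian Amiri--Isaacs proof.
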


That is, $\mathbb{Z}_n$ is the unique group of order $n$ with the largest value of $\psi(G)$ for groups
of that order. Later Amiri et al. in 
\cite{amiri-pureandApplied} and, independently,
Shen et al. in \cite{shen-et-al} investigated the groups with the second largest value of the sum of element orders.  
This function $\psi$ has been considered in various works (see \cite{amiri-communication-secondmax, amiri-algapplctn, chew-chin-lim, asad-joa, Her-joa, Her-cia, Her-jpaa,  tarnauceanu-israel}). While the goal of some of the papers was to find out the largest, second largest, least possible values of $\psi(G)$, the aim of some other papers was to prove new criteria for structural properties (like solvability, nilpotency, etc.) of finite groups.

Amiri and Amiri in \cite{amiri-amiri-cia} considered the following  generalization of the above function defined as:
$$\psi_k(G)= \sum_{g \in G} o(g)^k$$ 
for positive integers $k \geq 1$. Later, this function was also considered in \cite{suvra}.
We first note that
there exists positive integer $k>1$ and two groups $G_1$ and $G_2$ of same order such that $\psi(G_1)> \psi(G_2)$ but $\psi_k(G_1)< \psi_k(G_2).$ For example, one can take $G_1= D_{18}$ (the dihedral group consisting of $36$ elements) and $G_2= \mathbb{Z}_4 \times \Z_3 \times \Z_3$ and $k=6$ and a simple calculation shows that $\psi(G_1)=219<275=\psi(G_2)$ but 
$\psi_6(G_1)>\psi_6(G_2)$. Therefore, the comparison of the $\psi$-value of two groups can not directly tell about the comparison of $\psi_k$-value of two groups. Thus, for a fixed positive integer $n$, finding out the groups of order $n$ with the largest $\psi_k$ value is a valid question. Amiri et al. in \cite[Theorem 2.6]{amiri-amiri-cia} answered this question.

\begin{theorem}
\label{thm:Amiri_kth_gen}
Let $\mathbb{Z}_n$ be the cyclic group of order $n$ and $k$ be any positive integer. Then $\psi_k(G) < \psi_k(\mathbb{Z}_n)$  for all
non-cyclic groups $G$ of order $n$. 
\end{theorem}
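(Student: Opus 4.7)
My plan is to adapt the induction used by Amiri, Jafarian Amiri and Isaacs in their proof of Theorem \ref{thm:amiri-cia}, noting that the exponent $k$ enters the argument only through the weight $d^k$ attached to an element of order $d$. I would proceed by strong induction on $n = |G|$, the base case being vacuous.

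The setup rests on two ingredients. First, grouping elements of $G$ by the cyclic subgroup they generate and letting $c_d(G)$ denote the number of cyclic subgroups of $G$ of order $d$, one obtains
\[
\psi_k(G) = \sum_{d\mid n} d^k \phi(d)\, c_d(G), \qquad \psi_k(\mathbb{Z}_n) = \sum_{d\mid n} d^k \phi(d),
\]
so the theorem reduces to showing $\sum_{d\mid n} d^k \phi(d)(1-c_d(G)) > 0$ for non-cyclic $G$. Second, Frobenius' theorem supplies the constraints $|L_d(G)| \geq d$ for every $d\mid n$, where $L_d(G) = \{g\in G : g^d = 1\}$, with $G$ cyclic if and only if equality holds at every divisor.

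I would first settle the $p$-group case. When $n = p^a$ the divisor lattice is a chain, so Abel summation, applied to the representation $a_{p^j}(G) = |L_{p^j}(G)| - |L_{p^{j-1}}(G)|$ for the number of elements of order $p^j$, yields
\[
\psi_k(\mathbb{Z}_{p^a}) - \psi_k(G) \;=\; (p^k-1)\sum_{j=1}^{a-1} p^{jk}\bigl(|L_{p^j}(G)| - p^j\bigr),
\]
a non-negative combination of Frobenius excesses; non-cyclicity forces at least one excess to be strictly positive, which yields the strict inequality. For $G$ nilpotent, the Sylow decomposition gives $G = \prod_i P_i$ with pairwise coprime orders, on which $\psi_k$ is multiplicative ($\psi_k(A\times B)=\psi_k(A)\psi_k(B)$ whenever $\gcd(|A|,|B|)=1$, since then $o((a,b))=o(a)o(b)$), so the nilpotent case reduces to the $p$-group case.

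The main obstacle is the non-nilpotent case, because the M\"obius-inverted identity
\[
\psi_k(G) = \sum_{e\mid n} e^k\, |L_e(G)|\prod_{p\mid (n/e)} (1-p^k)
\]
has coefficients of mixed sign, so the Frobenius inequalities cannot be combined term by term. Here I would invoke the inductive hypothesis on a judiciously chosen proper subgroup $H < G$ (for instance, the normalizer of a non-normal Sylow subgroup, or a maximal subgroup containing a prescribed element), bound $\psi_k(G) - \psi_k(H) \leq \sum_{g \in G \setminus H} o(g)^k$ using crude order estimates on $G \setminus H$, and compare against $\psi_k(\mathbb{Z}_n)$ using the strict monotonicity $\psi_k(\mathbb{Z}_m) < \psi_k(\mathbb{Z}_n)$ for proper divisors $m\mid n$, which is immediate from the formula $\psi_k(\mathbb{Z}_n) = \sum_{d\mid n} d^k \phi(d)$. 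The key technical step is to check that the monotonicity estimates used in the original $k=1$ argument of Amiri, Jafarian Amiri and Isaacs survive the heavier weighting by $d^k$; this amounts to replacing $d$ by $d^k$ throughout their inequality chain and verifying that each intermediate comparison remains valid, which it does because $d \mapsto d^k$ is strictly increasing on the positive divisors.
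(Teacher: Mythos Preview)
The paper does not supply its own proof of this statement; it is quoted from \cite{amiri-amiri-cia}, and the paper's Theorem~\ref{thm:711generalization} is a strict strengthening. The relevant argument is, however, reproduced almost verbatim inside the proof of Lemma~\ref{lem:k-upperbound}, and it is \emph{not} the approach you sketch.

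Your $p$-group computation via Abel summation and Frobenius, and the reduction of the nilpotent case to it, are correct. The non-nilpotent paragraph is a genuine gap: you never commit to which subgroup $H$ you take, what the ``crude order estimates on $G\setminus H$'' actually are, or which ``inequality chain'' from Amiri--Jafarian Amiri--Isaacs is to be upgraded; you are asserting that an unspecified argument survives. More importantly, your description of that argument is wrong: their proof does not induct on a subgroup, nor does it use Frobenius counting beyond the cyclic case. One assumes $\psi_k(G)\ge\psi_k(\mathbb{Z}_n)$ and, from $\psi_k(\mathbb{Z}_n)>n^k\phi(n)$ together with $\phi(n)\ge n(q-1)/p$, extracts an element $x$ with $o(x)>n/p$ for $p$ the largest prime divisor of $n$. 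Then $[G:\langle x\rangle]<p$, so $\langle x\rangle$ contains a Sylow $p$-subgroup $P$ of $G$ which is therefore cyclic and normal. Lemma~\ref{lem:analogue_cia} gives $\psi_k(G)\le\psi_k(P)\,\psi_k(G/P)$; cancelling $\psi_k(P)=\psi_k(\mathbb{Z}_{|P|})$ transfers the hypothesis to the \emph{quotient} $G/P$, which by induction on the number of prime divisors is cyclic, whence $G=P\rtimes F$ with $F$ cyclic. Finally Remark~\ref{rem:semidirect_product_analog} forces $C_F(P)=F$, so $G=P\times F$ is cyclic. None of your proposed ingredients---normalizers of non-normal Sylow subgroups, maximal subgroups, or coset-complement estimates---appear, and without them your non-nilpotent case is not a proof but a hope that someone else's argument transplants.
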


In this paper,
 we start with the following result on the class of nilpotent groups. 

\begin{theorem}
\label{thm:Amiri_jaa_gen} 
Let $G$ be a nilpotent group of order $n$ and $k$ be any fixed positive integer. Then $\psi_k(G)$ is minimum among all possible nilpotent groups of order $n$ if and only if each Sylow subgroup of $G$ has prime
exponent. 
\end{theorem}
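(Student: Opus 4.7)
The plan is to reduce the statement to a question about individual Sylow subgroups by exploiting the fact that a finite nilpotent group is the direct product of its Sylow subgroups, together with the multiplicativity of $\psi_k$ across coprime direct factors.

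First I would establish the key multiplicative lemma: if $H$ and $K$ are finite groups with $\gcd(|H|,|K|)=1$, then $\psi_k(H\times K)=\psi_k(H)\psi_k(K)$. This is immediate because for $(h,k')\in H\times K$ one has $o((h,k'))=o(h)\,o(k')$, and so
\[
\psi_k(H\times K)=\sum_{h\in H}\sum_{k'\in K}\bigl(o(h)\,o(k')\bigr)^k=\psi_k(H)\,\psi_k(K).
\]
Writing $n=p_1^{a_1}\cdots p_r^{a_r}$ and $G=P_1\times\cdots\times P_r$ with $P_i$ the Sylow $p_i$-subgroup of the nilpotent group $G$, this yields $\psi_k(G)=\prod_{i=1}^r \psi_k(P_i)$. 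Since $\psi_k(P_i)>0$ and since each $P_i$ can be chosen independently to be any group of order $p_i^{a_i}$, the problem reduces to minimizing $\psi_k(P)$ over all $p$-groups $P$ of order $p^a$ for each prime power $p^a$.

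For the $p$-group analysis, let $P$ have order $p^a$ and let $n_i$ denote the number of elements of order $p^i$ in $P$, so that $n_0=1$ and $\sum_{i\geq 0}n_i=p^a$. Then
\[
\psi_k(P)=1+\sum_{i\geq 1}n_i\,p^{ik}.
\]
Comparing this with the value $1+(p^a-1)p^k$ that is attained when $P$ has exponent $p$ (e.g. $P\simeq(\mathbb{Z}/p\mathbb{Z})^a$), I get
\[
\psi_k(P)-\bigl(1+(p^a-1)p^k\bigr)=\sum_{i\geq 2}n_i\bigl(p^{ik}-p^k\bigr)\geq 0,
\]
with equality if and only if $n_i=0$ for every $i\geq 2$, i.e.\ $P$ has exponent $p$. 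Thus $\psi_k(P)$ attains its minimum over all $p$-groups of order $p^a$ precisely when $P$ has prime exponent $p$.

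Combining the two reductions, $\psi_k(G)=\prod_i\psi_k(P_i)$ is minimized (simultaneously in each factor, since the factors are positive and independently chosen) exactly when each $P_i$ has exponent $p_i$, which is the claimed characterization. I do not anticipate a serious obstacle here: the only subtle points are (i) to justify that minimizing each factor $\psi_k(P_i)$ is equivalent to minimizing the product, which follows from the factors being positive reals and the choice of each $P_i$ being independent, and (ii) to verify that groups of prime exponent $p$ and order $p^a$ exist (the elementary abelian group $(\mathbb{Z}/p\mathbb{Z})^a$ suffices), so that the minimum is actually attained within the class of nilpotent groups of order $n$.
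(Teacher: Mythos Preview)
Your proposal is correct and follows essentially the same route as the paper: factor $\psi_k$ over the Sylow decomposition via the coprime multiplicativity lemma, then observe that for a $p$-group $P$ of order $p^a$ one has $\psi_k(P)\geq 1+(p^a-1)p^k$ with equality iff $P$ has exponent $p$. Your write-up is in fact slightly more detailed than the paper's (you make the independence-of-factors and existence points explicit), but the argument is the same.
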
 

We can see that Theorem \ref{thm:Amiri_kth_gen}
and Theorem \ref{thm:Amiri_jaa_gen} gives results on the extreme values of $\psi_k(G).$ We next concentrate on the class of finite abelian groups and there we prove the following recurrence relation with help of which it is easy to give an algorithm to find out the exact value of $\psi_k(G)$
for any finite abelian group $G$. 

\begin{theorem}
\label{thm:recurrence}
For positive integers $r_1 \leq r_2 \leq \dots \leq r_{t}$,
we have 
\vspace{1 mm} 
\begin{eqnarray}
& & \psi_k( \mathbb{Z}_{p^{r_1}} \times \mathbb{Z}_{p^{r_2}} \times  \dots \times 
\mathbb{Z}_{p^{r_t}})  \nonumber \\
& = & \psi_k( \underbrace{\mathbb{Z}_{p^{r_1}} \times \mathbb{Z}_{p^{r_1}} \times  \dots \times 
\mathbb{Z}_{p^{r_1}}}_\text {t \hspace{1 mm} \text{times} }  ) +  
  p^{r_1} \bigg ( \psi_k(  \mathbb{Z}_{p^{r_2}} \times  \dots \times 
\mathbb{Z}_{p^{r_t}}) - \psi_k( \underbrace{\mathbb{Z}_{p^{r_1}} \times \mathbb{Z}_{p^{r_1}} \times  \dots \times 
\mathbb{Z}_{p^{r_1}}}_\text {t-1 \hspace{1 mm} \text{times} } ) \bigg)\nonumber  \\
& & \label{eqn:recurrence}
\end{eqnarray}
where, 
$$\psi_k( \underbrace{\mathbb{Z}_{p^{r_1}} \times \mathbb{Z}_{p^{r_1}} \times  \dots \times 
\mathbb{Z}_{p^{r_1}}}_\text {s \hspace{1 mm} \text{times} }  )= \displaystyle \frac{p^{sr+s+rk+k} -p^{sr+kr+k}+p^k-1 }{p^{s+k}-1}  .$$
\end{theorem}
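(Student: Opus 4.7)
The plan is to split the theorem into two parts: first derive the closed form for the homocyclic $p$-groups $\mathbb{Z}_{p^r}^s$, and then use a counting argument to extract the recurrence for general partition shapes.

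For the homocyclic case $H = \mathbb{Z}_{p^r}^s$, I would count elements by order. Since $H$ is a $p$-group, every element has order a power of $p$; moreover $\Omega_m(H) := \{g \in H : g^{p^m} = 1\}$ is isomorphic to $\mathbb{Z}_{p^{\min(m,r)}}^s$, so $|\Omega_m(H)| = p^{sm}$ for $0 \leq m \leq r$. Hence the number of elements of exact order $p^m$ is $p^{sm} - p^{s(m-1)}$ for $1 \leq m \leq r$, plus the identity, giving
$$\psi_k(H) = 1 + \sum_{m=1}^{r}(p^{sm} - p^{s(m-1)})\, p^{mk} = 1 + (p^{s+k} - p^k)\cdot\frac{p^{r(s+k)} - 1}{p^{s+k}-1}.$$
Combining over a common denominator and simplifying the exponents yields the closed form exactly as stated.

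For the recurrence, write $G = \mathbb{Z}_{p^{r_1}} \times G_2$ with $G_2 = \mathbb{Z}_{p^{r_2}} \times \cdots \times \mathbb{Z}_{p^{r_t}}$ and split $\psi_k(G) = \sum_{g \in G} o(g)^k$ according to whether $o(g) \leq p^{r_1}$ or $o(g) > p^{r_1}$. The set of elements with $o(g) \leq p^{r_1}$ is precisely $\Omega_{r_1}(G)$, and because $r_1 \leq r_i$ for every $i$, this subgroup is isomorphic to $\mathbb{Z}_{p^{r_1}}^t$; its contribution is therefore $\psi_k(\mathbb{Z}_{p^{r_1}}^t)$. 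For the complementary part, the key observation is that for $j > r_1$ one has $\min(j, r_1) = r_1$, so
$$|\Omega_j(G)| = p^{r_1}\cdot|\Omega_j(G_2)| \qquad (j \geq r_1).$$
Telescoping, the number of elements of order exactly $p^j$ in $G$ equals $p^{r_1}$ times the corresponding count in $G_2$ for all $j > r_1$. Summing $p^{jk}$ against these counts gives $p^{r_1}$ times the tail of $\psi_k(G_2)$ beyond $p^{r_1}$, and this tail equals $\psi_k(G_2) - \psi_k(\Omega_{r_1}(G_2)) = \psi_k(G_2) - \psi_k(\mathbb{Z}_{p^{r_1}}^{t-1})$, which is exactly the bracketed expression in the recurrence.

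The main obstacle is not conceptual but bookkeeping: one has to line up two parallel filtrations, by $|\Omega_j|$ and by "elements of exact order $p^j$," and check that the factorization $|\Omega_j(G)| = p^{r_1}|\Omega_j(G_2)|$ holds exactly on the range $j \geq r_1$ so that the two sums glue cleanly at the boundary. Once this identity and the identification $\Omega_{r_1}(G_2) \cong \mathbb{Z}_{p^{r_1}}^{t-1}$ are in place, the recurrence follows immediately; the remaining work is the routine algebraic simplification of the geometric sum in the homocyclic formula.
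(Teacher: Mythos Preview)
Your proof is correct and is essentially the same argument as the paper's. The paper defines the subgroup $A=\{(a_1,\dots,a_t): a_i\equiv 0 \pmod{p^{r_i-r_1}}\}$, which is precisely your $\Omega_{r_1}(G)$, identifies it with $\mathbb{Z}_{p^{r_1}}^t$, and then observes that on the complement the order of a tuple is governed by $(a_2,\dots,a_t)$ alone so the first coordinate contributes a free factor of $p^{r_1}$; your $|\Omega_j(G)|=p^{r_1}|\Omega_j(G_2)|$ for $j\ge r_1$ is exactly this observation phrased in terms of order-filtration counts, and the homocyclic formula is derived in the paper (as a preliminary lemma) by the same element-count you give.
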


From Theorem \ref{thm:amiri-cia}, it is natural to ask whether there exists some constant $C<1$ such that $\psi(G) \leq C\psi(\mathbb{Z}_n)$ where $G$ is a non-cyclic group of order $n$.
Herzog et al. in \cite{Her-pureandApplied} proved the following remarkable result, giving an answer to this question. 

\begin{theorem}
\label{thm:Herzog_et_al_main}
Let $G$ be a non-cyclic group of order $n.$ Then $$\psi(G) \leq \frac{7}{11} \psi(\mathbb{Z}_n).$$ 
\end{theorem}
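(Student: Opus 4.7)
The plan is as follows. Let $G$ be non-cyclic of order $n$, and let $m=\max\{o(g):g\in G\}$. Since every $o(g)$ divides $n$ by Lagrange, we have $m\mid n$; moreover $m=n$ would force $G$ cyclic, so in fact $m\leq n/2$. Write $k=n/m\geq 2$. Fix $g\in G$ with $o(g)=m$, set $C=\langle g\rangle$, and split
$$\psi(G)=\psi(C)+\sum_{x\in G\setminus C}o(x)\ \leq\ \psi(\mathbb{Z}_m)+(n-m)\,m,$$
using $o(x)\leq m$ everywhere. So it would suffice to establish the purely numerical inequality
$$\psi(\mathbb{Z}_m)+(n-m)\,m\ \leq\ \tfrac{7}{11}\,\psi(\mathbb{Z}_n) \qquad (\star)$$
for every $m\mid n$ with $n/m\geq 2$.

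For the arithmetic side, I would exploit multiplicativity of $\psi$ on coprime cyclic factors together with the closed form $\psi(\mathbb{Z}_{p^a})=(p^{2a+1}+1)/(p+1)$, and write $\psi(\mathbb{Z}_n)=\psi(\mathbb{Z}_{km})$. When $k\geq 3$, the factor $\psi(\mathbb{Z}_n)/\psi(\mathbb{Z}_m)$ grows quickly enough with $k$ to dominate the additive correction $(n-m)m$, and $(\star)$ follows by a short case split across the prime factorization of $k$.

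The hard case is $k=2$, where the naive bound fails outright: already for $G=S_3$ one has $\psi(\mathbb{Z}_3)+3\cdot 3=16$, while $\tfrac{7}{11}\psi(\mathbb{Z}_6)=147/11\approx 13.4$. To recover, I would sharpen the estimate of $\sum_{x\notin C}o(x)$ by using the structural fact that elements of order exactly $m$ must lie inside cyclic subgroups of order $m$, each supplying $\phi(m)$ generators, and any two distinct such subgroups meet in a proper subgroup. Bounding the number $c_m(G)$ of these cyclic subgroups (together with an inductive descent on $|G|$ for the residual extremal configurations) extracts the extra saving that makes $(\star)$ go through, with equality holding exactly at $n=4$, $G=\mathbb{Z}_2\times\mathbb{Z}_2$.

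The main obstacle is precisely this $k=2$ refinement: the non-cyclicity hypothesis has so far been used only through the one-line inequality $m\leq n/2$, which is much too weak in the tight regime. To reach the sharp constant $7/11$ one must argue structurally that a \emph{genuinely positive proportion} of the elements outside $C$ have order strictly less than $m$. Pinning down the correct uniform bound on $c_m(G)$, and cleanly dispatching the residual extremal configurations (those containing a copy of $\mathbb{Z}_2\times\mathbb{Z}_2$, etc.) by induction on $|G|$, is the crux of the proof.
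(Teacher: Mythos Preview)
Your proposal has a genuine, self-acknowledged gap: the inequality $(\star)$ is simply false for $k=n/m=2$ (your $S_3$ example already shows this), and the entire content of the theorem lies in that regime. What you offer in its place --- bound the number $c_m(G)$ of cyclic subgroups of order $m$ and induct --- is not a workable plan as stated. There is no uniform bound on $c_m(G)$: in $\mathbb{Z}_p\times\mathbb{Z}_p$ one has $c_m=p+1$, so ``a genuinely positive proportion of elements outside $C$ have order $<m$'' cannot be extracted from a count of maximal cyclic subgroups alone. Your inductive descent is also unspecified: inducting on $|G|$ requires passing to a subgroup or quotient while controlling both $\psi$ and $\psi(\mathbb{Z}_{|\cdot|})$, and you have not said which subgroup or quotient, nor why the hypothesis survives. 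Even the $k\geq 3$ case is only asserted (``a short case split''), not proved.

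The paper's argument (specializing its Theorem~\ref{thm:711generalization} to $k=1$, which is essentially Herzog--Longobardi--Maj's original proof) takes a completely different route and avoids your obstacle. Instead of the crude additive split $\psi(G)\leq \psi(\mathbb{Z}_m)+(n-m)m$, it first uses the lower bound on $\psi(\mathbb{Z}_n)$ to locate an element $x$ with $[G:\langle x\rangle]<p$ (for $p$ the largest prime divisor of $n$), forcing a \emph{cyclic normal} Sylow $p$-subgroup $P$. The key multiplicative inequality $\psi(G)\leq \psi(P)\,\psi(G/P)$ (Lemma~\ref{lem:analogue_cia}) then reduces to $G/P$, and one inducts on the largest prime. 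When $G/P$ turns out cyclic, $G=P\rtimes F$ and the semidirect-product formula (Lemma~\ref{semidirect product}) gives the sharp bound directly. The small-prime base cases $p=2,3$ are handled by explicit computation. The point is that non-cyclicity is exploited structurally (through the normal Sylow and the centralizer $C_F(P)$), not merely through $m\leq n/2$; this is exactly the leverage your approach is missing.
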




The main result of this paper is the following result which is ofcourse stronger than Theorem \ref{thm:Amiri_kth_gen} and also immediately gives Theorem \ref{thm:Herzog_et_al_main} if we set $k=1$ in \eqref{eqn:main_result}. 

\begin{theorem}
\label{thm:711generalization} 
Let $G$ be a non-cyclic group of order $n$ and $k$ be any fixed positive integer. Then,
\begin{equation}
\label{eqn:main_result}
\psi_k(G) \leq \frac{1+3.2^k}{1+2^k+2.4^k} \psi_k(\mathbb{Z}_n).
\end{equation}
\end{theorem}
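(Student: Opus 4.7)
The plan is to prove the bound by strong induction on $|G|$, centered on the observation that
\[
C_k := \frac{1+3 \cdot 2^k}{1+2^k+2 \cdot 4^k} = \frac{\psi_k(\mathbb{Z}_2 \times \mathbb{Z}_2)}{\psi_k(\mathbb{Z}_4)}
\]
is exactly the ratio realized by $\mathbb{Z}_2 \times \mathbb{Z}_2$ against the cyclic group of the same order. This identifies the extremal case, and the whole strategy is to reduce a general non-cyclic $G$ to it.

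First I would handle the nilpotent case via multiplicativity. Since $o((a,b)) = o(a)\,o(b)$ whenever $\gcd(|A|,|B|)=1$, one has $\psi_k(A \times B) = \psi_k(A)\,\psi_k(B)$, so for nilpotent $G = P_1 \times \cdots \times P_s$ with Sylow factors,
\[
\frac{\psi_k(G)}{\psi_k(\mathbb{Z}_n)} = \prod_{i=1}^{s}\frac{\psi_k(P_i)}{\psi_k(\mathbb{Z}_{p_i^{a_i}})}.
\]
By Theorem~\ref{thm:Amiri_kth_gen} each factor is at most $1$, and non-cyclicity of $G$ forces at least one $P_i$ to be non-cyclic. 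It therefore suffices to prove the key technical lemma: for every non-cyclic $p$-group $P$ of order $p^a$,
\[
\frac{\psi_k(P)}{\psi_k(\mathbb{Z}_{p^a})} \leq C_k,
\]
with equality iff $(p,a,P)=(2,2,\mathbb{Z}_2 \times \mathbb{Z}_2)$. To prove this I would use Theorem~\ref{thm:recurrence} to write $\psi_k(\mathbb{Z}_{p^a})$ in closed form, and combine the fact that a non-cyclic $p$-group of order $p^a$ has exponent at most $p^{a-1}$ with a termwise count of elements of each order in $P$. Monotonicity of the resulting ratio in $p$ and in $a$ then pins the extremum at $p=a=2$.

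For non-nilpotent $G$, I would induct on $|G|$. Let $M$ be a maximal cyclic subgroup of order $m$ with index $t = n/m \geq 2$; since $M$ is maximal cyclic, every element of $G \setminus M$ has order at most $m$, so
\[
\psi_k(G) \leq \psi_k(\mathbb{Z}_m) + (n-m)\,m^k.
\]
When $t \geq 3$, a direct comparison with the divisor-sum expression for $\psi_k(\mathbb{Z}_n)$ already yields a ratio strictly below $C_k$. The delicate case is $t = 2$, where $G$ has a cyclic subgroup of index $2$ and lies in a short list of well-known structures (dihedral, generalized quaternion, semidihedral, or split metacyclic); these I would handle either by explicit computation of $\psi_k(G)$, or, when $m$ is composite, by a coprime decomposition that exposes a non-cyclic Sylow $2$-subgroup and reduces back to the nilpotent case.

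The main obstacle will be the $p$-group lemma: the recurrence of Theorem~\ref{thm:recurrence} gives exact values, but establishing that $\mathbb{Z}_2 \times \mathbb{Z}_2$ (rather than some other non-cyclic $p$-group, such as $\mathbb{Z}_p \times \mathbb{Z}_{p^{a-1}}$ for large $a$, or a non-abelian $p$-group) is the \emph{unique} maximiser of $\psi_k(P)/\psi_k(\mathbb{Z}_{p^a})$ requires a monotonicity argument that is uniform in $k$. A secondary obstacle is the $t=2$ portion of the non-nilpotent induction, where $\psi_k(G)$ can be close to $\psi_k(\mathbb{Z}_n)$ and the crude maximal-cyclic bound is not sharp enough, so a more delicate analysis tailored to each structural family is needed.
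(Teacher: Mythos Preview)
Your nilpotent reduction via multiplicativity is sound, and the $p$-group lemma you isolate is plausible. The real gap is in the non-nilpotent case. The ``short list'' you cite for index-$2$ extensions (dihedral, generalized quaternion, semidihedral, split metacyclic) is a classification of \emph{$2$-groups} with a cyclic maximal subgroup, not of arbitrary finite groups with a cyclic subgroup of index~$2$. Groups such as $\mathbb{Z}_5\times S_3$ are non-nilpotent, have a cyclic subgroup of index~$2$ with composite order $m=15$, yet have cyclic Sylow $2$-subgroup $\mathbb{Z}_2$, so your ``coprime decomposition exposing a non-cyclic Sylow $2$-subgroup'' cannot apply. Your crude bound $\psi_k(\mathbb{Z}_m)+(n-m)m^k$ does not rescue this: already for $D_{2p}$ with $k=1$ and $p$ large it gives a ratio tending to $2/3>7/11=C_1$. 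The $t\ge 3$ claim is likewise a bare assertion: since $\psi_k(\mathbb{Z}_m)/m^{k+1}=\prod_{p\mid m}\psi_k(\mathbb{Z}_{p^{a_p}})/p^{a_p(k+1)}$ can be made arbitrarily small by taking $m$ with many prime factors, the crude bound need not sit below $C_k\psi_k(\mathbb{Z}_n)$ for index~$3$ either. Finally, although you announce ``induction on $|G|$'', no inductive step is actually described in the non-nilpotent branch.

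The paper avoids the nilpotent/non-nilpotent dichotomy altogether and inducts instead on the \emph{largest} prime $p$ dividing $n$. The lower bound of Lemma~\ref{lem:lowerbound} forces any putative counterexample $G$ to contain an element $x$ with $[G:\langle x\rangle]<p$; hence the Sylow $p$-subgroup $P$ is cyclic and normal. The key inequality $\psi_k(G)\le\psi_k(P)\,\psi_k(G/P)$ (Lemma~\ref{lem:analogue_cia}) then passes the hypothesis to $G/P$, whose order has strictly smaller largest prime, so by induction $G/P$ is cyclic and $G=P\rtimes F$ with $F$ cyclic; the semidirect-product estimate (Remark~\ref{rem:semidirect_product_analog}) finishes the case $p>3$. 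The base cases $p=2$ and $p=3$ are handled by separate explicit inequalities (Lemmas~\ref{lem:ineqlemma_1}--\ref{lem:ineq3lemma}). This route never needs your $p$-group lemma in full generality, nor any classification of index-$2$ extensions.
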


Throughout this article, $\phi$ is the Euler's function. For a group $G$, the notation $\text{exp}(G)$  denotes the 
exponent of $G$ which is the smallest positive integer $x$ such that $g^x=e_G$ where $e_G$ denotes the identity element of $G$. Most of our notation is standard and we refer the reader to the books \cite{Isac-ams, Scott}.

\section{Preliminaries} 

In this Section, we recall some earlier known result
and also prove some preliminary lemmas which will be crucial in the forthcoming sections. Amiri et al. in \cite[Lemma 2.5]{amiri-amiri-cia} proved the following. 

\begin{lemma}
\label{lem:coprime}
For any fixed positive integer $k$ and any two finite groups $A$ and $B$, we have 
$\psi_k(A \times B) \leq \psi_k(A) \times \psi_k(B).$ Moreover, equality holds if and only if 
 $\gcd(|A|, |B|)=1$. 
\end{lemma}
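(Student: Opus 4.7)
The plan is to compare the two sides of the inequality term by term, using the formula for the order of an element in a direct product.

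First I would note that for any $(a,b)\in A\times B$ the order is $o((a,b))=\mathrm{lcm}(o(a),o(b))$, and rewrite
\[
\psi_k(A\times B)=\sum_{a\in A}\sum_{b\in B}\mathrm{lcm}(o(a),o(b))^k,\qquad \psi_k(A)\psi_k(B)=\sum_{a\in A}\sum_{b\in B}o(a)^k\,o(b)^k.
\]
Using the identity $\mathrm{lcm}(m,n)=mn/\gcd(m,n)$, each summand on the left equals $o(a)^k o(b)^k/\gcd(o(a),o(b))^k$, which is $\leq o(a)^k o(b)^k$ with equality iff $\gcd(o(a),o(b))=1$. Summing these pointwise inequalities gives the desired bound
\[
\psi_k(A\times B)\le \psi_k(A)\,\psi_k(B).
\]

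For the equality condition, the forward direction is immediate: if $\gcd(|A|,|B|)=1$ then since $o(a)\mid |A|$ and $o(b)\mid |B|$ by Lagrange, we get $\gcd(o(a),o(b))=1$ for every pair $(a,b)$, so every termwise inequality above is an equality and the totals agree.

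For the converse, I would argue by contrapositive: assume $\gcd(|A|,|B|)>1$ and produce a single pair on which the termwise inequality is strict, which is enough since every other term is $\ge 0$. Pick a prime $p$ dividing $\gcd(|A|,|B|)$; by Cauchy's theorem there exist $a\in A$ and $b\in B$ with $o(a)=o(b)=p$, so $\gcd(o(a),o(b))=p>1$, giving strict inequality in that summand and hence in the sum. The only subtlety worth double-checking is that no other summand swings the other way, but this is automatic because the pointwise comparison always goes in the same direction; so the argument goes through without obstacle.
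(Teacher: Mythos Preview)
Your argument is correct. Note, however, that the paper does not actually supply its own proof of this lemma: it simply quotes the result from \cite[Lemma~2.5]{amiri-amiri-cia} without reproducing the argument. Your termwise comparison via $o((a,b))=\mathrm{lcm}(o(a),o(b))=o(a)o(b)/\gcd(o(a),o(b))$, together with Cauchy's theorem for the strictness direction, is precisely the standard proof one would expect (and is essentially what appears in the cited source), so there is nothing to compare against within this paper itself.
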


We now consider the semidirect product of two groups and here we quote the following result from \cite[Lemma 2.2]{Her-pureandApplied}.

\begin{lemma}\label{semidirect product}
Let $G$ be a finite group satisfying $G=P \rtimes F$, where $P$ is a cyclic $p$-group for some prime $p$, $|F|>1$ and $(p,|F|)=1$. Then the following statements hold:
\begin{enumerate}
    \item Each element of $F$ acts on $P$ either trivially or fixed-point-freely.
    \item If $x\in F, \hspace{2 mm}o(x)=m$ and $u\in P$, then $m$ is the least positive integer satisfying $(ux)^m\in P$.
    \item If $u\in P$ and $x\in C_{F}(P)$, then $o(ux)=o(u)o(x)$.
    \item If $u\in P$ and $x\in F \setminus C_{F}(P)$, then $o(ux)=o(x)$.
    \item Let $Z=C_{F}(P)$. Then
    \begin{equation*}
    \psi(G)=\psi(P)\psi(Z)+|P|\psi(F \setminus  Z)<\psi(P)\psi(Z)+|P|\psi(F).
    \end{equation*}
\end{enumerate}
\end{lemma}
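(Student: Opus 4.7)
The plan is to address the five items in the stated order, since each rests on the previous ones. The overarching setup is to write a typical element of $G$ as $ux$ with $u \in P$, $x \in F$ (using $G = PF$ and $P \cap F = \{e\}$), and to let $\alpha \in \mathrm{Aut}(P)$ denote the automorphism induced by conjugation by $x$.

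For (1), I would use that $\mathrm{Aut}(P) \cong (\mathbb{Z}/p^r\mathbb{Z})^{\times}$ when $|P| = p^r$, and that the kernel of reduction modulo $p$ is a Sylow $p$-subgroup of $\mathrm{Aut}(P)$. So any element of $\mathrm{Aut}(P)$ whose order is coprime to $p$ must send a generator $a$ to $a^k$ with $k \not\equiv 1 \pmod{p}$ unless it is trivial; then $k-1$ is a unit in $\mathbb{Z}/p^r\mathbb{Z}$, so the fixed-point equation $j(k-1) \equiv 0 \pmod{p^r}$ forces $j \equiv 0$. Since $F$ has order coprime to $p$, so does its image in $\mathrm{Aut}(P)$, giving (1). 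For (2), an easy induction in the semidirect product yields $(ux)^j = u \cdot \alpha(u) \cdots \alpha^{j-1}(u) \cdot x^j$, so $(ux)^j \in P$ iff $x^j \in P \cap F = \{e\}$ iff $m \mid j$.

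Items (3)--(5) then follow quickly from (1) and (2). Statement (3) is immediate, since when $x \in C_F(P)$ the element $ux$ is a product of two commuting elements of coprime orders (as $\gcd(|P|,|F|)=1$). For (4), set $m = o(x)$ and $v = (ux)^m = u\,\alpha(u) \cdots \alpha^{m-1}(u)$; commutativity of $P$ together with $\alpha^m = \mathrm{id}$ shows that applying $\alpha$ merely cyclically permutes the factors of $v$, so $\alpha(v) = v$. By (1) we must then have $v = e$, and combined with (2) this pins down $o(ux) = m = o(x)$. Finally, (5) is a counting argument: every $g \in G$ factors uniquely as $ux$ with $u \in P$, $x \in F$, so splitting the double sum over $F = Z \sqcup (F \setminus Z)$ and applying (3) on the first piece and (4) on the second yields the equality $\psi(G) = \psi(P)\psi(Z) + |P|\psi(F \setminus Z)$. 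The strict inequality is immediate from $\psi(Z) \geq |Z| \geq 1$, which forces $\psi(F \setminus Z) < \psi(F)$.

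The main obstacle is (1): it is the slightly delicate but standard fact that a non-trivial automorphism of $p'$-order acts fixed-point-freely on a cyclic $p$-group, which hinges on the structure of $(\mathbb{Z}/p^r\mathbb{Z})^\times$ and the coprimality hypothesis. Once (1) is in hand, everything else is bookkeeping using the formula for powers of $ux$ in the semidirect product and the bilinear way $\psi$ decomposes across the two factors.
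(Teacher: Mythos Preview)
Your argument is correct in all five parts. The paper, however, does not actually supply a proof of this lemma: it is simply quoted verbatim from \cite[Lemma 2.2]{Her-pureandApplied}, so there is no ``paper's own proof'' to compare against beyond that citation. Your write-up therefore supplies more than the paper does. The key step you identify---that a $p'$-automorphism of a cyclic $p$-group is either trivial or fixed-point-free, obtained via the structure of $(\mathbb{Z}/p^r\mathbb{Z})^\times$---is the standard route, and your derivations of (2)--(5) from it are clean and accurate.
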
 

In an identical way we can prove the following:

\begin{remark}
\label{rem:semidirect_product_analog} 
Let  $G$ be a finite group satisfying $G=P \rtimes F$, where $P$ is a cyclic $p$-group for some prime $p$, $|F|>1$ and $(p,|F|)=1$. Then, 
\begin{equation*}
    \psi_k(G)=\psi_k(P)\psi_k(Z)+|P|\psi_k(F \setminus  Z)<\psi_k(P)\psi_k(Z)+|P|\psi_k(F).
    \end{equation*}
\end{remark}

The next lemma is also easy to prove. Yet we provide the details for the sake of completeness.

\begin{lemma}
\label{lem:sumofkthpower} 
Let $G$ be a cyclic group of order $p^r$ for some prime $p$. Then,
$$\psi_k(G)= \displaystyle \frac{p^{kr+k+r+1}-p^{kr+k+r}+p^k-1}{p^{k+1}-1}= \displaystyle \frac {p^k.p^{r(k+1)} + (1+p+\dots+p^{k-1}) } {1+p+\dots+p^k}  . $$
Let $G =  \underbrace{\mathbb{Z}_{p^{r}} \times \mathbb{Z}_{p^{r}} \times  \dots \times 
\mathbb{Z}_{p^{r}}}_\text {s \hspace{1 mm} \text{times} } . $ We then have 
$$ \psi_k(G)= \displaystyle \frac{p^{sr+s+rk+k} -p^{sr+kr+k}+p^k-1 }{p^{s+k}-1}  .$$
\end{lemma}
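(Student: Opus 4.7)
The plan is a direct count of elements by order, followed by summing a geometric series; no deep ideas are needed. In fact, the first statement is just the $s=1$ case of the second, so I would present both together and derive the cyclic formula as a corollary (or prove the cyclic case first as a warm-up).

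For the cyclic case $G = \mathbb{Z}_{p^r}$, the number of elements of order $p^i$ equals $\phi(p^i) = p^i - p^{i-1}$ for $1 \leq i \leq r$, and there is exactly one element of order $1$. Hence
\begin{equation*}
\psi_k(G) = 1 + \sum_{i=1}^{r} (p^i - p^{i-1}) p^{ik} = 1 + \frac{p-1}{p} \sum_{i=1}^{r} p^{i(k+1)}.
\end{equation*}
Summing the inner geometric series gives $\sum_{i=1}^{r} p^{i(k+1)} = p^{k+1}(p^{r(k+1)}-1)/(p^{k+1}-1)$, and a routine simplification (bringing everything over the common denominator $p^{k+1}-1$ and using $(r+1)(k+1) = rk+r+k+1$) produces the stated closed form.

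For the elementary power case $G = (\mathbb{Z}_{p^r})^s$, the key observation is that for a tuple $(g_1,\ldots,g_s)$ every $o(g_j)$ is a power of $p$, so $o(g_1,\ldots,g_s) = \max_j o(g_j) = \operatorname{lcm}_j o(g_j)$. The elements of order at most $p^i$ form the subgroup $(\mathbb{Z}_{p^i})^s$ inside $G$, which has exactly $p^{si}$ elements. Therefore the number of elements of order exactly $p^i$ is $p^{si} - p^{s(i-1)}$ for $1 \leq i \leq r$, and one element has order $1$. Thus
\begin{equation*}
\psi_k(G) = 1 + \sum_{i=1}^{r} (p^{si} - p^{s(i-1)}) p^{ik} = 1 + \frac{p^s-1}{p^s} \sum_{i=1}^{r} p^{i(k+s)}.
\end{equation*}
Again the inner sum is a geometric progression with ratio $p^{k+s}$, and an identical algebraic manipulation (using $(r+1)(k+s)=rk+rs+k+s$) yields the claimed expression $(p^{rk+rs+k+s} - p^{rk+rs+k} + p^k - 1)/(p^{k+s}-1)$.

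There is essentially no obstacle here; the only thing worth being careful about is the bookkeeping in the final simplification, particularly keeping track of the exponents $(r+1)(k+s)$ and $r(k+s)+k$ to recognize them as $rk+rs+k+s$ and $rk+rs+k$ respectively. Since the first statement is literally the $s=1$ instance of the second (with $p^{k+1}-1$ in place of $p^{k+s}-1$, etc.), writing a single unified computation and then specializing is the cleanest presentation.
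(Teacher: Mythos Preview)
Your proposal is correct and follows essentially the same approach as the paper: count elements by their exact order (using $\phi(p^i)$ in the cyclic case and $p^{si}-p^{s(i-1)}$ in the product case) and sum the resulting geometric progression. The paper splits the sum into two separate geometric series rather than factoring out $(p^s-1)/p^s$ first, and it merely asserts the element count $p^{si}-p^{s(i-1)}$ without your subgroup-of-elements-of-order-at-most-$p^i$ justification, but these are cosmetic differences only.
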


\begin{proof}
We first consider the case when $G$ is cyclic and of order $p^r.$ The main thing is to note that the number of elements of order $p^i$ in $G$ is $\phi(p^i).$ Thus, we have 
$$\psi_k(G)= 1+p^k\phi(p)+ \dots+p^{rk}\phi(p^r).$$
The remaining follows from computation and hence is omitted. 

For the second part, we observe that the number of elements of order $p^i$ is $p^{is}-p^{(i-1)s}.$ Here we do the calculation for the sake of completeness. We have 
\begin{eqnarray*}
\psi_k(G) &  = & 1+ (p^s-1) p^k + (p^{2s}-p^s) p^{2k} + \cdots + (p^{rs}-p^{(r-1)s})p^{rk} \\
& = & 1+ p^{s+k}+p^{2(s+k)}+\dots + p^{r(s+k)} - [ p^k+p^{s+2k} + \dots+ p^{(r-1)s+rk} ] \\
& = & \displaystyle \frac{p^{(s+k)(r+1)}-1}{p^{s+k}-1} - p^k\frac{p^{(s+k)r}-1}{p^{s+k}-1} \\
& = & \displaystyle \frac{p^{sr+s+rk+k} -p^{sr+kr+k}+p^k-1 }{p^{s+k}-1}
\end{eqnarray*}
This completes the proof. 
\end{proof}

One can note that using Lemma \ref{lem:coprime} and Lemma \ref{lem:sumofkthpower}, one can explicitly compute $\psi_k(G)$ for any cyclic group $G$.


We next provide a lower bound of the $\psi_k$-value of any cyclic group in terms of the highest and the lowest prime divisors of the order of the group. 

\begin{lemma}
\label{lem:lowerbound}
Let $q=p_1 < p_2 < \cdots < p_t = p$ be the prime divisors of $n$ and the corresponding Sylow subgroups of
$\mathbb{Z}_n$ are $P_1, P_2, \dots, P_t.$ Then 
$$\psi_k(\mathbb{Z}_n) > \displaystyle \frac{q^k}{1+p+\dots+p^k} n^{k+1} > \displaystyle \bigg(\frac{q}{p+1}\bigg)^kn^{k+1}.$$ 
\end{lemma}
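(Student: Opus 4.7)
\medskip
\noindent\textbf{Proof plan.}
Since the Sylow subgroups of $\mathbb{Z}_n$ have pairwise coprime orders, Lemma \ref{lem:coprime} yields the factorization
$$\psi_k(\mathbb{Z}_n) \;=\; \prod_{i=1}^t \psi_k(\mathbb{Z}_{p_i^{r_i}}).$$
My plan is to lower-bound each factor using Lemma \ref{lem:sumofkthpower}, then handle the resulting product over primes by induction on $t$.

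\medskip
For the per-factor bound, rewrite the formula of Lemma \ref{lem:sumofkthpower} as
$$\psi_k(\mathbb{Z}_{p^r}) \;=\; \frac{p^{\,r(k+1)+k} \,+\, (1+p+\cdots+p^{k-1})}{1+p+\cdots+p^k}.$$
Dropping the (positive) second summand in the numerator gives the strict bound $\psi_k(\mathbb{Z}_{p^r}) > f(p)\,p^{r(k+1)}$, where $f(x) := \frac{x^k}{1+x+\cdots+x^k}$. Multiplying over $i$,
$$\psi_k(\mathbb{Z}_n) \;>\; n^{k+1}\prod_{i=1}^t f(p_i).$$

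\medskip
The heart of the argument is then to show
$$\prod_{i=1}^t f(p_i) \;\geq\; \frac{q^k}{1+p+\cdots+p^k}.$$
I would prove this by induction on $t$ using the strengthened hypothesis $\prod_{i=1}^{s} f(p_i) \geq \frac{q^k}{1+p_s+\cdots+p_s^k}$. The base $s=1$ is an equality, and after one line of algebra the inductive step reduces to showing
$$p_{s+1}^k \;\geq\; 1 + p_s + p_s^2 + \cdots + p_s^k,$$
which follows from the trivial gap $p_{s+1} \geq p_s + 1$ between distinct primes via the binomial expansion $(p_s+1)^k = \sum_{j=0}^k \binom{k}{j} p_s^j \geq \sum_{j=0}^k p_s^j$.

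\medskip
Combining the previous two displayed inequalities already yields the first inequality of the lemma. For the second, the same binomial identity with $p$ replacing $p_s$ gives $(p+1)^k \geq 1+p+\cdots+p^k$, whence $\frac{q^k}{1+p+\cdots+p^k} \geq \left(\frac{q}{p+1}\right)^k$. The main obstacle is choosing the right inductive statement: the naive version (with $p$ on the right) is not self-reducible since $p$ depends on the full prime list, so the induction has to be stated relative to the current running maximum $p_s$, with the passage from $s=t-1$ to $s=t$ absorbing the discrepancy between $p_{t-1}$ and $p$.
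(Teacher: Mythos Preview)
Your proof is correct and essentially identical to the paper's. The paper uses the same factorization via Lemma~\ref{lem:coprime}, the same per-factor lower bound from Lemma~\ref{lem:sumofkthpower}, and the same key inequality $p_{i+1}^k \geq 1+p_i+\cdots+p_i^k$ (from $p_{i+1}\geq p_i+1$); the only cosmetic difference is that the paper presents the product bound $\prod_i f(p_i)\geq q^k/(1+p+\cdots+p^k)$ as a one-line telescoping rather than as an explicit induction on~$t$.
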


\begin{proof} 
As $\mathbb{Z}_n=P_1 \times P_2 \times \cdots P_t$, by applying Lemma \ref{lem:coprime}, we have 
\begin{eqnarray*}
\psi_k(\mathbb{Z}_n) & = & \prod_{i=1}^t \psi_k (\mathbb{Z}_{p_i^{r_i}} ) \\
& \geq &
\displaystyle \prod _{i=1} ^ {t} \frac{p_i^k} {1+p_i+\dots+p_i^k} |P_i|^{k+1}.  \\
& \geq & \displaystyle \frac{q^k} {1+p+\dots+p^k} \prod _{i=1} ^ {t} |P_i| ^ {k+1} =  \displaystyle \frac{q^k} {1+p+\dots+p^k} n^{k+1}. 
\end{eqnarray*}
The second line follows by using Lemma \ref{lem:sumofkthpower}. The third line uses the fact that $p_{i+1}\geq p_i+1$ and hence $p_{i+1}^k > p_i^k+p_i^{k-1}+\dots+1.$ Thus, the first inequality of this Lemma is proved. The second is immediate. 
\end{proof}

Before going to the next lemma, we need the following result, which is implicit in the proof of Lemma A of \cite{amiri-communication}.  

\begin{lemma}
\label{lem:implicit_communication}
Let $P$ be a Sylow p-subgroup of $G$, $P \trianglelefteq G$ and $P$ is cyclic in $G$. Let $m$ be the order of the element $Px.$ Every element of $Px$ clearly has the form $ux$ for some element $u \in P.$ We have
$ o(ux) \leq m(o(u))$ with equality if and only if $x$ centralizes $u.$ 
\end{lemma}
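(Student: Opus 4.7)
The plan is to reduce $o(ux)$ to the order of a single element of $P$ by iterating the normality relation, and then exploit that $P$ is cyclic.

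First I would let $\sigma \in \mathrm{Aut}(P)$ denote conjugation by $x$ (well-defined since $P \trianglelefteq G$). A routine induction on $n$, using the identity $xu = \sigma(u)\,x$, gives
$$(ux)^n = u \cdot \sigma(u) \cdot \sigma^2(u) \cdots \sigma^{n-1}(u) \cdot x^n.$$
Since $Px$ has order $m$ in $G/P$, one has $x^n \in P$ iff $m \mid n$, so $o(ux)$ is a multiple of $m$; writing $v := (ux)^m \in P$ gives $o(ux) = m \cdot o(v)$. In the semidirect-product setting in which this lemma is applied (compare Lemma~\ref{semidirect product}, with $x \in F$ and $(p,|F|)=1$), the element $x^m \in P$ has order both a power of $p$ and coprime to $p$, forcing $x^m = e$; hence
$$v = u \cdot \sigma(u) \cdots \sigma^{m-1}(u) \in P.$$

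Next I would bound $o(v)$. Since $P$ is abelian and $\sigma$ preserves element orders, $v^{o(u)} = \prod_{i=0}^{m-1}\sigma^i(u)^{o(u)} = e$, so $o(v) \le o(u)$ and the main inequality $o(ux) \le m \cdot o(u)$ follows at once. For the equality case I would fix a generator $g$ of $P$ with $|P| = p^a$ and write $u = g^j$, $\sigma(g) = g^s$; then $v = u^T$ with $T = 1 + s + s^2 + \cdots + s^{m-1}$, so the question reduces to deciding when $\gcd(T, o(u)) = 1$.

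If $x$ centralises $u$, then $s \equiv 1 \pmod{o(u)}$, so $T \equiv m \pmod{o(u)}$; since $m$ is coprime to $p$ and $o(u)$ is a $p$-power, $\gcd(T, o(u)) = 1$ and $o(v) = o(u)$, giving equality. Conversely, if $x$ does not centralise $u$, then $x$ does not act trivially on $P$, and Lemma~\ref{semidirect product}(1) forces $\sigma$ to act fixed-point-freely on $P$, i.e.\ $s - 1$ is a unit modulo $p^a$. Combined with $\sigma^m = \mathrm{id}$ (from $x^m = e$), this gives $s^m \equiv 1 \pmod{p^a}$, whence $T = (s^m - 1)/(s - 1) \equiv 0 \pmod{p^a}$; then $v = e$ and the inequality is strict. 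The main obstacle is this backward direction: one has to convert the non-triviality of the action into the vanishing modulo $p^a$ of the cyclotomic-type sum $1 + s + \cdots + s^{m-1}$, and Lemma~\ref{semidirect product}(1) is the key ingredient that makes the conversion clean.
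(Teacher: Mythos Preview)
The paper does not actually prove this lemma; it only records it as ``implicit in the proof of Lemma~A'' of \cite{amiri-communication} and moves on. So there is nothing on the paper's side to compare against --- only your argument to assess.

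Your computation with $\sigma$, the product formula $(ux)^m = u\,\sigma(u)\cdots\sigma^{m-1}(u)\,x^m$, and the reduction to $v = u^T$ with $T = 1+s+\cdots+s^{m-1}$ is exactly the right engine, and both directions of the equality analysis are clean once $x^m = e$ is known. The weak point is how you obtain $x^m = e$. You import the semidirect-product hypothesis of Lemma~\ref{semidirect product} (``$x \in F$''), but the only place Lemma~\ref{lem:implicit_communication} is invoked is in the proof of Lemma~\ref{lem:analogue_cia}, which assumes merely that $P$ is a normal cyclic Sylow subgroup --- no complement $F$ is available at that stage. So your patch does not cover the setting in which the lemma is actually used.

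The correct repair is upstream, in the statement: one must take the coset representative $x$ to satisfy $o(x) = m$. This is always possible, since for any $y \in Px$ the commuting decomposition $y = y_p\,y_{p'}$ into $p$-part and $p'$-part has $y_p \in P$ (as $P$ is the unique Sylow $p$-subgroup) and hence $y_{p'} \in Px$ with $o(y_{p'}) = m$. With this choice $x^m = e$ is immediate, and your argument runs verbatim without any appeal to Lemma~\ref{semidirect product}: the dichotomy ``$\sigma$ trivial or fixed-point-free'' follows directly from the structure of $(\mathbb{Z}/p^a\mathbb{Z})^{\times}$, since an element $s$ with $s \equiv 1 \pmod p$ has $p$-power multiplicative order, so any automorphism of order dividing $m$ (coprime to $p$) and $\neq 1$ must have $s-1$ a unit. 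In short: right mechanics, wrong justification for $x^m = e$; replace the semidirect-product appeal by the choice-of-representative argument and the proof is complete in the required generality.
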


Using Lemma \ref{lem:implicit_communication}, we now prove the following: 

\begin{lemma}
\label{lem:analogue_cia} 
Let $P$ be a Sylow $p$-subgroup of $G$, $P \trianglelefteq G$ and $P$ is cyclic in $G$. Then, 
$$\psi_k(G) \leq \psi_k(P) \psi_k(G/P).$$ 
Moreover, equality happens if and only if $P \subseteq Z(G).$
\end{lemma}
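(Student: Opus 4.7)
The plan is to partition $G$ into the cosets of the normal subgroup $P$ and bound the contribution of each coset separately using Lemma \ref{lem:implicit_communication}.

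First I would choose, for each coset of $P$ in $G$, a representative $x$, obtaining a disjoint decomposition $G = \bigsqcup_{Px \in G/P} Px$. Every element of the coset $Px$ has the form $ux$ with $u \in P$, so
\begin{equation*}
\psi_k(G) \;=\; \sum_{Px \in G/P} \, \sum_{u \in P} o(ux)^k .
\end{equation*}
Setting $m = o(Px)$ (computed in $G/P$), Lemma \ref{lem:implicit_communication} gives $o(ux) \leq m \cdot o(u)$, hence $o(ux)^k \leq m^k \, o(u)^k$. Summing this over $u \in P$ yields
\begin{equation*}
\sum_{u \in P} o(ux)^k \;\leq\; o(Px)^k \sum_{u \in P} o(u)^k \;=\; o(Px)^k \, \psi_k(P) .
\end{equation*}
Summing over all cosets gives $\psi_k(G) \leq \psi_k(P) \, \psi_k(G/P)$, which is the desired inequality.

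For the equality statement, the forward direction is immediate: if $P \subseteq Z(G)$, then every $x \in G$ centralizes every $u \in P$, so by the equality case of Lemma \ref{lem:implicit_communication} we have $o(ux) = o(Px) \cdot o(u)$ for all $u$ and all representatives, and the chain of inequalities collapses to equalities. Conversely, suppose equality holds in $\psi_k(G) \leq \psi_k(P)\psi_k(G/P)$. Since $o(ux)^k \leq o(Px)^k o(u)^k$ termwise, equality of the totals forces equality term by term, hence for every chosen representative $x$ and every $u \in P$ the element $x$ centralizes $u$, i.e.\ $x \in C_G(P)$. Because $P$ is abelian (cyclic) we have $P \subseteq C_G(P)$, so every coset $Px$ lies inside $C_G(P)$; taking the union over cosets gives $G = C_G(P)$, that is, $P \subseteq Z(G)$.

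The only delicate point is the equality analysis: one must note that the choice of coset representatives is irrelevant because $P$ is abelian (so if one representative $x$ centralizes $P$ then so does every $ux$ with $u \in P$), and that $P \subseteq C_G(P)$ lets one upgrade ``every representative centralizes $P$'' to ``all of $G$ centralizes $P$''. Beyond this, the argument is a direct $k$-th power analogue of the standard $k=1$ computation that appears in \cite{amiri-communication}.
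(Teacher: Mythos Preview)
Your proof is correct and follows essentially the same route as the paper: partition $G$ into $P$-cosets, apply Lemma~\ref{lem:implicit_communication} termwise to bound $\sum_{u\in P} o(ux)^k$ by $o(Px)^k\psi_k(P)$, and then sum over cosets, with the equality characterization coming from the equality case of that lemma. Your treatment of the equality direction is actually a bit more careful than the paper's, since you explicitly explain why the conclusion is independent of the choice of coset representatives via $P\subseteq C_G(P)$.
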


\begin{proof}
Let $x \in G$ and we consider $Px$ as an element of $G/P.$ We at first claim the following: 

\vspace{1 mm} 

{\bf Claim:}  We have $ \psi_k(Px) \leq o(Px)^k \psi_k (P)$ and equality holds if and only if $x$ centralizes $P$. 

\vspace{1 mm} 
{\bf Proof of claim:} We have 
\begin{eqnarray*}
\psi_k(Px) & = & \sum _{u \in P} o(ux)^k \leq \sum _{u \in P} o(Px)^k o(u) ^k \\
& = & o(Px)^k \sum_{u \in P} o(u)^k = o(Px)^k \psi_k(P).
\end{eqnarray*} 
Equality holds in the above equation if and only if $x$ centralizes $u$ for every element $u$ in $P$, that is, when $x$ centralizes $P$.  

With this claim, we now have 
\begin{eqnarray*}
\psi_k(G) & = & \sum _{Px \in G/P} \psi_k(Px)  \\
& \leq & \sum _{Px \in G/P}  o(Px) ^k \psi_k(P) =  \psi_k(P)\sum_{Px \in G/P}  o(Px) ^k = \psi_k(P) \psi_k(G/P). 
\end{eqnarray*} 
Clearly, equality holds if and only if every element $x$ in $G$ centralizes $P$, or equivalently $P \subseteq Z(G).$
\end{proof}

\section{Nilpotent and abelian groups} 

We note that if $G$ is a $p$-group of order, say $p^r$,  then we of course have $\psi_k (G) \geq (p^r-1) p^k+1$ and equality holds if and only if every non-identity element of $G$ has order $p$. With this observation, we now move to the proof of Theorem \ref{thm:Amiri_jaa_gen}.

\begin{proof}[Proof of Theorem \ref{thm:Amiri_jaa_gen}]
Let $G$ be a nilpotent group of order $n>1$ and moreover, let
$$
G= P_1 \times P_2 \times \dots P_t,$$
where $P_i$ is the Sylow $p_i$-subgroup of $G$ and for all $1 \leq i \leq t$ we have $|P_i|=p_i^{r_i} $ with $r_i \in \mathbb{N}$  and $\gcd(p_i, p_j)=1$ for $i \neq j.$ By Lemma \ref{lem:coprime} we have $\psi_k(G)= \psi_k(P_1) \times \psi_k(P_2) \dots \psi_k(P_t)$ and by the above observation $\psi_k(P_i)$ is minimum if and only if every non-identity element of $P_i$ is of prime order. Thus, $\psi_k(G)$ will be minimum if and only if each Sylow subgroup of $G$ has prime order. This completes the proof. 
\end{proof}

We next consider the family of abelian groups and and as any abelian group can be written as a direct product of abelian $p$-groups, determining the $\psi_k$ value explicitly for any abelian $p$-group is sufficient in order to determine the $\psi_k$ value of any abelian group. In this context, we have the following. 

\begin{proof}
[Proof of Theorem \ref{thm:recurrence}]
Define $$A= \{ (a_1, a_2, \dots, a_t) \in \mathbb{Z}_{p^{r_1}} \times \mathbb{Z}_{p^{r_2}} \times \dots \times  \mathbb{Z}_{p^{r_t}} : a_i \equiv 0 \hspace{1 mm} \mod p^{r_i-r_1} \hspace{1 mm} \mbox{ for each } i  \}$$
Clearly, $A$ is a subgroup of $\mathbb{Z}_{p^{r_1}} \times \mathbb{Z}_{p^{r_2}} \times \dots \times    \mathbb{Z}_{p^{r_t}}.$ Define $$f:  \underbrace{\mathbb{Z}_{p^{r_1}} \times \mathbb{Z}_{p^{r_1}} \times  \dots \times 
\mathbb{Z}_{p^{r_1}}}_\text {t \hspace{1 mm} \text{times} } \mapsto A $$ by
$$f(x_1, x_2, \dots, x_t) = (x_1, x_2p^{r_2-r_1}, x_3 p^{r_3-r_1}, \dots, x_tp^{r_t-r_1}).$$ 
It is easy to check that this map $f$ is reversible and it is also clearly a group homomorphism. Thus, $\mathbb{Z}_{p^{r_1}} \times \mathbb{Z}_{p^{r_1}} \times  \dots \times 
\mathbb{Z}_{p^{r_1}} \simeq A. $ 
Thus, 
\begin{equation*}
\psi_k (\mathbb{Z}_{p^{r_1}} \times \mathbb{Z}_{p^{r_2}} \times \dots \times  \mathbb{Z}_{p^{r_t}} ) = \psi_k(\underbrace{\mathbb{Z}_{p^{r_1}} \times \mathbb{Z}_{p^{r_1}} \times  \dots \times 
\mathbb{Z}_{p^{r_1}}}_\text{t \text{times}})+\psi_k(\mathbb{Z}_{p^{r_1}} \times \mathbb{Z}_{p^{r_2}} \times \dots \times  \mathbb{Z}_{p^{r_t}}-A)
\end{equation*} 
Now, we have 
\begin{eqnarray*}
   & & \psi_k(\mathbb{Z}_{p^{r_1}} \times \mathbb{Z}_{p^{r_2}} \times \dots \times  \mathbb{Z}_{p^{r_t}}-A) \\ & = & \sum _{(a_1, a_2, \dots, a_t) \in \mathbb{Z}_{p^{r_1}} \times \mathbb{Z}_{p^{r_2}} \times \dots \times  \mathbb{Z}_{p^{r_t}}}  o (a_1, a_2, \dots, a_t)^k - 
   \sum _{(a_1, a_2, \dots, a_t) \in A } o (a_1, a_2, \dots, a_t)^k \\ 
    & = & p^{r_1}  \bigg [ \sum _{( a_2, \dots, a_t) \in  \mathbb{Z}_{p^{r_2}} \times \dots \times  \mathbb{Z}_{p^{r_t}}}  o ( a_2, \dots, a_t)^k - 
   \sum _{( a_2, \dots, a_t) \in A } o ( a_2, \dots, a_t)^k \bigg] \\
   & = & p^{r_1} \bigg ( \psi_k(  \mathbb{Z}_{p^{r_2}} \times  \dots \times 
\mathbb{Z}_{p^{r_t}}) - \psi_k( \underbrace{\mathbb{Z}_{p^{r_1}} \times \mathbb{Z}_{p^{r_1}} \times  \dots \times 
\mathbb{Z}_{p^{r_1}}}_\text {t-1 \hspace{1 mm} \text{times} } ) \bigg) 
\end{eqnarray*}
This completes the proof of Equation \eqref{eqn:recurrence}. The proof of this Theorem is now complete by using Lemma \ref{lem:sumofkthpower}. 
\end{proof}

It is worth mentioning here that Saha \cite[Proposition 9]{suvra}
proved the following recursive formula.

\begin{theorem}
\label{thm:Saha_finite_abelian_pgroup}
Let $G = \mathbb{Z}_{p^r} \times H$ where $r \geq 1$ and $H$ is a $p$-group with $\text{exp}(H) \geq p^r.$ Let $N_j$ be the number of elements in $H$ that have order $p^j.$ Then, 
\begin{eqnarray*}
\psi_k(G)=\begin{cases} 
p^r\psi_k(H) + \displaystyle \sum_{i=2}^r \bigg[ (p^i-p^{i-1}) \big[(p^{ki}-1)+\sum_{j=1}^{i-1}(p^{ki}-p^{kj} )N_j \big] \bigg] &  \\ + (p-1)(p^k-1), 
& \text{ if $r>1$} \\  & \\

p\psi_k(H)+(p-1)(p^k-1), & \text{ if $r=1$} 
 \end{cases}
\end{eqnarray*}

\end{theorem}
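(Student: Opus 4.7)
The natural plan is a direct computation exploiting the fact that every element of $G$ has $p$-power order, so that for $(a,h)\in \mathbb{Z}_{p^r}\times H$ with $o(a)=p^i$ and $o(h)=p^j$ one has $o((a,h))=\mathrm{lcm}(p^i,p^j)=p^{\max(i,j)}$. First I would write $M_i$ for the number of elements of order $p^i$ in $\mathbb{Z}_{p^r}$, so $M_0=1$ and $M_i=p^i-p^{i-1}$ for $1\leq i\leq r$, and reorganise the sum by fixing $i$:
$$\psi_k(G)=\sum_{i=0}^{r} M_i \sum_{j\geq 0} N_j\, p^{k\max(i,j)}.$$

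Second, I would simplify the inner sum for fixed $i$. Using the identity $p^{k\max(i,j)}=p^{kj}+(p^{ki}-p^{kj})\mathbf{1}_{\{j\leq i\}}$ together with the observation that the bracket vanishes at $j=i$, I get
$$\sum_{j\geq 0} N_j\, p^{k\max(i,j)} = \psi_k(H) + \sum_{j=0}^{i-1} N_j (p^{ki}-p^{kj}).$$
For $i\geq 1$ I would peel off the $j=0$ contribution (with $N_0=1$) to rewrite the last sum as $(p^{ki}-1)+\sum_{j=1}^{i-1} N_j(p^{ki}-p^{kj})$. Substituting back and using $\sum_{i=0}^{r} M_i=p^r$ immediately produces the term $p^r\psi_k(H)$, while what remains is
$$\sum_{i=1}^{r} (p^i-p^{i-1})\Bigl[(p^{ki}-1)+\sum_{j=1}^{i-1} N_j(p^{ki}-p^{kj})\Bigr].$$

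Finally I would split this residual sum into its $i=1$ term, which evaluates to $(p-1)(p^k-1)$ because the inner sum is empty, and its tail $\sum_{i=2}^{r}$, which matches the nested double sum in the statement. When $r=1$ the tail is vacuous and only the $i=1$ piece survives, giving $\psi_k(G)=p\psi_k(H)+(p-1)(p^k-1)$, which is the second branch of the formula.

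The only real obstacle is careful index bookkeeping, namely keeping the $j=0$ versus $j\geq 1$ split coordinated with the $i=0,1$ versus $i\geq 2$ split so that all the boundary cases align. It is worth noting that the hypothesis $\exp(H)\geq p^r$ is not needed for the identity itself, since both sides are well-defined finite sums as soon as every element of $G$ has $p$-power order; it is imposed so that recursive use of the relation genuinely decomposes $H$ further in an algorithmic application.
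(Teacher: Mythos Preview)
Your argument is correct. The decomposition via $o((a,h))=p^{\max(i,j)}$, the rewriting $p^{k\max(i,j)}=p^{kj}+(p^{ki}-p^{kj})\mathbf{1}_{\{j\le i\}}$, and the subsequent splitting of the $j=0$ and $i=1$ boundary terms all go through cleanly and assemble into the stated formula; your closing remark that the hypothesis $\exp(H)\ge p^r$ plays no role in the identity itself is also accurate.

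There is, however, nothing in the paper to compare against: the paper does not prove this theorem at all. It is quoted verbatim as a result of Saha \cite[Proposition~9]{suvra} and is included only for the purpose of contrasting it with the authors' own recursion (Theorem~\ref{thm:recurrence}). So your direct double-counting proof stands on its own; it is the natural argument and almost certainly coincides in spirit with Saha's original, but the present paper offers no independent route for comparison.
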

It is clear that for any abelian $p$-group with $r=1$, Theorem \ref{thm:Saha_finite_abelian_pgroup} is better than Theorem \ref{thm:recurrence}. But for $r>1$, the expression of $\psi_k(G)$ in Theorem 
\ref{thm:recurrence} contains nontrivial summations  where as the expression of $\psi_k(G)$ in Theorem \ref{thm:recurrence} is {\it neat} in the following sense that it involves only $3$ terms. 

We can now compute $\psi_k(G)$ for any finite abelian group $G$ using Lemma \ref{lem:coprime} and Theorem \ref{thm:recurrence}. 


\section{An important Lemma and some inequalities} 

The main focus of the remaining part of the paper is proving Theorem \ref{thm:711generalization} and in that direction, we require the following important lemma.  

\begin{lemma}
\label{lem:k-upperbound}
Let $G$ be a non-cyclic finite group of order $n$ and let $q$ be the smallest prime divisor of $n$. Then we have
\begin{center}
    $\psi_{k}(G)< \displaystyle \frac{1}{(q-1)^k}\psi_{k}(\mathbb{Z}_{n}).$
\end{center}
\end{lemma}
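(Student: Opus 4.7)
The plan is to prove the lemma by induction on $n=|G|$. The case $q=2$ gives $1/(q-1)^k=1$, so the inequality collapses to $\psi_k(G)<\psi_k(\mathbb{Z}_n)$, which is Theorem~\ref{thm:Amiri_kth_gen}. I may therefore assume $q\ge 3$, i.e.\ $n$ is odd. Then $G$ is solvable by the Feit--Thompson theorem, and in particular $G$ has a nontrivial normal Sylow $p$-subgroup $P$ for some prime $p$ dividing $n$ (for instance a Sylow subgroup of the Fitting subgroup). Observe also that since $G$ is non-cyclic, no element of $G$ has order $n$, so the maximum element order divides $n$ and is at most the largest proper divisor $n/q$; this crude fact will be useful as a safety net when combined with Lemma~\ref{lem:lowerbound}.

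Assuming such a $P$ is cyclic, my main tool is Lemma~\ref{lem:analogue_cia}, which gives $\psi_k(G)\le \psi_k(P)\psi_k(G/P)$, in tandem with the matching factorization $\psi_k(\mathbb{Z}_n)=\psi_k(P)\psi_k(\mathbb{Z}_{n/|P|})$ from Lemma~\ref{lem:coprime}. If $G/P$ is non-cyclic with smallest prime $q'\ge q$, the inductive hypothesis on $G/P$ yields $\psi_k(G/P)<(q'-1)^{-k}\psi_k(\mathbb{Z}_{n/|P|})\le(q-1)^{-k}\psi_k(\mathbb{Z}_{n/|P|})$, and multiplying by $\psi_k(P)$ closes this subcase. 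If instead $G/P$ is cyclic, then $G=P\rtimes(G/P)$ must have a nontrivial action on $P$ (otherwise $G$ would itself be cyclic, by coprimality), so Remark~\ref{rem:semidirect_product_analog} improves the bound to $\psi_k(G)<\psi_k(P)\psi_k(Z)+|P|\psi_k(G/P)$ with $Z\subsetneq G/P$; the desired factor $(q-1)^{-k}$ should then fall out by comparing $\psi_k(Z)$ with $\psi_k(G/P)$ using the explicit formulas of Lemma~\ref{lem:sumofkthpower} together with the lower bound of Lemma~\ref{lem:lowerbound}.

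The main obstacle is the remaining case, where the normal Sylow $p$-subgroup $P$ supplied by solvability is itself non-cyclic, so Lemma~\ref{lem:analogue_cia} no longer applies. My plan is to exploit the fact that a non-cyclic $p$-group has exponent at most $|P|/p$, so the deficit $\psi_k(\mathbb{Z}_{|P|})-\psi_k(P)$, quantified through Lemma~\ref{lem:sumofkthpower}, is substantial. Combining this deficit with a coset-by-coset estimate of $\psi_k(G)$ relative to $P$ analogous to the proof of Lemma~\ref{lem:analogue_cia}, and applying the inductive hypothesis to $G/P$ (which has smaller order and smallest prime at least $q$), I expect to recover the factor $(q-1)^{-k}$. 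The quantitative verification that the non-cyclicity of $P$ contributes enough slack uniformly in $p$, $q$ and $k$ --- and the book-keeping that merges the $P$-deficit with the inductive estimate on $G/P$ --- is where I anticipate the technical heart of the argument to lie.
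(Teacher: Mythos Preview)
Your reduction of the case $q=2$ to Theorem~\ref{thm:Amiri_kth_gen} is fine, but the odd-order strategy has a genuine gap. Solvability does \emph{not} furnish a normal Sylow subgroup of $G$: a Sylow subgroup of the Fitting subgroup $F(G)$ is the $p$-core $O_p(G)$, which is normal in $G$ but in general is strictly smaller than a Sylow $p$-subgroup of $G$. (For a concrete obstruction, take a direct product of two Frobenius-type groups such as $\big((\mathbb{Z}_5)^2\rtimes\mathbb{Z}_3\big)\times\big((\mathbb{Z}_3)^4\rtimes\mathbb{Z}_5\big)$; this odd-order group has no normal Sylow subgroup at all.) Once $P$ is only $O_p(G)$ and not a full Sylow subgroup, both of your main tools collapse: Lemma~\ref{lem:analogue_cia} is stated for Sylow $P$, and the factorisation $\psi_k(\mathbb{Z}_n)=\psi_k(P)\,\psi_k(\mathbb{Z}_{n/|P|})$ from Lemma~\ref{lem:coprime} requires $\gcd(|P|,n/|P|)=1$, which fails when $p$ still divides $|G/P|$. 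The ``non-cyclic $P$'' case you flag as the technical heart is therefore not the only obstacle; the more basic one is that your induction never gets off the ground.

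The paper's proof avoids both obstacles with a single device you did not use: it argues the contrapositive and lets the hypothesis $\psi_k(G)\ge(q-1)^{-k}\psi_k(\mathbb{Z}_n)$ do the structural work. Combining this with the bound $\psi_k(\mathbb{Z}_n)>n^k\phi(n)\ge(q-1)^k n^{k+1}/p^k$ forces some $x\in G$ with $o(x)>n/p$, where $p$ is the \emph{largest} prime divisor of $n$. Then $[G:\langle x\rangle]<p$, so $\langle x\rangle$ already contains a Sylow $p$-subgroup $P$ of $G$, which is automatically cyclic (as a subgroup of $\langle x\rangle$) and normal (since $[G:N_G(P)]<p$ together with $n_p\equiv 1\pmod p$ gives $n_p=1$). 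This produces a cyclic normal Sylow subgroup for free, with no appeal to Feit--Thompson, and then Lemma~\ref{lem:analogue_cia} and Remark~\ref{rem:semidirect_product_analog} finish the job by induction on the number of prime divisors. I would recommend reorganising your argument around this contrapositive step.
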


\begin{proof}
 It is easy to show that $\phi(n)\geq \displaystyle \frac{q-1}{p}n$, where $p$ and $q$ are respectively the largest prime and the smallest prime dividing $n$. So we have
 \begin{center}
     $\phi(n)\geq \displaystyle \frac{q-1}{p}n\geq \bigg(\frac{q-1}{p}\bigg)^kn$
 \end{center} 
 Now we need to prove that if $\psi_{k}(G)\geq\displaystyle \frac{1}{(q-1)^k}\psi_{k}(\mathbb{Z}_n)$ then $G$ is a cyclic group of order $n$.
 \medskip
 
As there are $\phi(n)$ many elements of order $n$, we clearly have, $\psi_{k}(\mathbb{Z}_{n})>n^k\phi(n)$. Hence, by our assumption,
 \begin{equation*}
     \psi_{k}(G) \geq \displaystyle \frac{1}{(q-1)^k}\psi_{k}(\mathbb{Z}_n)>\frac{n^k}{(q-1)^k}\phi(n)\geq \bigg(\frac{q-1}{p}\bigg)^k\frac{n^{k+1}}{(q-1)^k}=\frac{n^{k+1}}{p^k}.
 \end{equation*}
 
 This implies that there exists an element $x\in G$ such that $o(x)>\displaystyle\frac{n}{p}$. Thus $[G:\langle x\rangle]<p$ and $\langle x\rangle$ contains a Sylow $p$-subgroup $P$ of $G$. Since $\langle x\rangle\leq N_{G}(P)$, it follows that $P$ is a cyclic normal subgroup of $G$ and by using Lemma \ref{lem:analogue_cia} we obtain,
 \begin{center}
 $\psi_{k}(P)\psi_{k}(G/P)\geq \psi_{k}(G)\geq \displaystyle\frac{1}{(q-1)^k}\psi_{k}(\mathbb{Z}_{p^r})\psi_{k}(\mathbb{Z}_{n/p^r})$,
 \end{center}
 where $p^r=|P|$. Since $P\simeq \mathbb{Z}_{p^r}$, by cancellation we get 
 \begin{equation*}
\displaystyle \psi_{k}(G/P)\geq \frac{1}{(q-1)^k}\psi_{k}(\mathbb{Z}_{n/p^r}).
 \end{equation*}
 \medskip
 
 If $n=p^r$, $p$ a is prime, then the existence of $x\in G$ satisfying $o(x)>n/p$ implies that $o(x)=n$ and $G$ is cyclic, as required. So we may assume that $n$ is divisible by exactly $t$ different primes with $t>1$. Applying induction with respect to $t$, we may assume that the theorem holds for groups of order which has less than $t$ distinct prime divisors. Since $|G/P|$ has $t-1$ distinct prime divisors and $G/P$ satisfies our assumptions, it follows that $|G/P|$ is cyclic and $G=P\rtimes F$, with $F\simeq G/P$ and $F\neq 1$. Notice that $n=|P||F|$, where $ P$ and $F$ are both cyclic and $\gcd(|P|,|F|)=1$. So $\psi_{k}(\mathbb{Z}_{n})=\psi_{k}(P)\psi_{k}(F)$.
 \medskip
 
 If $C_{F}(P)=F$, then $G=P\times F$ and $G$ is cyclic, as required.
 \medskip 
 
 So it suffices to prove that if $C_{F}(P)=Z<F$, then $\psi_{k}(G)<\displaystyle\frac{1}{(q-1)^k}\psi_{k}(\mathbb{Z}_{n})$, which is contrary to  assumptions. Using Lemma \ref{semidirect product} we have
 \begin{center}
     $\psi_{k}(G)=\psi_{k}(P)\psi_{k}(Z)+|P|\psi_{k}(F\setminus Z) < \psi_k(P)\psi_k(Z) +|P|\psi_{k}(F)$.
 \end{center}
 Hence 
 \begin{equation*}
     \psi_{k}(G)<\psi_{k}(P)\psi_{k}(F)\bigg(\frac{\psi_{k}(Z)}{\psi_{k}(F)}+\frac{|P|}{\psi_{k}(P)}\bigg)=\psi_{k}(\mathbb{Z}_{n})\bigg(\frac{\psi_{k}(Z)}{\psi_{k}(F)}+\frac{|P|}{\psi_{k}(P)}\bigg).
 \end{equation*}
 Now as $P$ is a cyclic $p$-group, we have,
 \begin{eqnarray}
 \frac{|P|}{\psi_{k}(P)}&=&\frac{|P|(p^{k+1}-1)}{|P|^{k+1}p^{k}(p-1)+(p^k-1)}\nonumber\\
 &=&\frac{|P|(p^k+p^{k-1}+\cdots+1)}{|P|^{k+1}p^k+(p^{k-1}+\cdots+1)}\nonumber\\
 &<&\frac{|P|(p^k+p^{k-1}+\cdots+1)}{|P|^{k+1}p^{k}}\nonumber\\
 &<& \frac{(p^k+\cdots+1)}{p^{2k}-p^{k-1}}\nonumber\\
 &=& \frac{(p-1)(p^k+\cdots+1)}{(p-1)p^{k-1}(p^{k+1}-1)}=\frac{1}{(p-1)(p^{k-1})}<\frac{1}{q}\cdot\frac{1}{q^{k-1}}=\frac{1}{q^{k}}\nonumber
\end{eqnarray}

Next note that $Z$ is a proper subgroup of the cyclic group $F$ and $\psi_{k}(F)$ is a product of $\psi_{k}(S)$, with $S$ running over all the Sylow subgroups of $F$. Moreover, $\psi_{k}(Z)$ is a similar product,  and at least one Sylow subgroup of $Z$, say Sylow $p_1$-subgroup $R_{Z}$, is properly contained in the Sylow $p_1$-subgroup $R_{F}$ of $F$  and let $R_F$ is of order $p_1^s$ for some prime $p_1$. Here $F$ is a cyclic group of order, say, $p_1^s|Q|$, where $Q$ is the maximal cyclic subgroup of $F$ with $\gcd(p_1,|Q|)=1$. Since,  the Sylow $p_1$-subgroup of $Z$, $R_{Z}$, is properly contained  in the cyclic group of $R_{F}$, the order of $R_{Z}$ is less than equal to $p_1^{s-1}$. Moreover, $Z$ is a cyclic subgroup properly contained in $F$.  Hence order of $Z$ is less equal to
$|R_{Z}||Q|$, which is equal to  $p_1^{s-1}|Q|$. So, applying Lemma \ref{lem:coprime}, we obtain the following:
\begin{eqnarray}
    &&\psi_{k}(Z)\leq \psi_{k}(R_{Z})\psi_{k}(Q)
    \leq\psi_{k}(R_{F})\psi_{k}(Q)= \psi_{k}(F)\nonumber\\
    &\Rightarrow& [\psi_{k}(Z)][\psi_{k}(R_{F})\psi_{k}(Q)]\leq [\psi_{k}(R_{Z})\psi_{k}(Q)][\psi_{k}(F)]\nonumber\\
    &\Rightarrow& \psi_{k}(Z)\psi_{k}(R_{F})\leq \psi_{k}(F)\psi_{k}(R_{Z})\nonumber\\
   &\Rightarrow& \frac{\psi_{k}(Z)}{\psi_{k}(F)}\leq \frac{\psi_{k}(R_{Z})}{\psi_{k}(R_{F})}\leq \frac{(p_1^{s-1})^{k+1}p_1^k(p_1-1)+(p_1^k-1)}{(p_1^s)^{k+1}p_1^k(p_1-1)+(p_1^k-1)},\label{similar}
\end{eqnarray} 
where, the last inequality is obtained by Lemma \ref{lem:sumofkthpower}. if $s=1$, it is easy to see that 
$$\frac{p_1^k(p_1-1)+(p_1^k-1)}{(p_1)^{k+1}p_1^k(p_1-1)+(p_1^k-1)} \leq \frac{1}{p_1^{k+1}-p_1^k} \leq \frac{1}{q^{k+1}-q^k}.$$
If $s >1$, we have 
\begin{eqnarray}
\frac{(p_1^{s-1})^{k+1}p_1^k(p_1-1)+(p_1^k-1)}{(p_1^s)^{k+1}p_1^k(p_1-1)+(p_1^k-1)} & \leq &  \frac{1}{p_1^{k+1}-p_1^k} \frac{\big[(p_1^{s-1})^{k+1}p_1^k(p_1-1)+(p_1^k-1)\big](p_1^{k+1}-p_1^k)}{(p_1^s)^{k+1}p_1^k(p_1-1)+(p_1^k-1)} \nonumber \\
& \leq  & \frac{ (p_1^s)^{k+1}p_1^k(p_1-1)-p_1^{(s-1)(k+1)+2k}(p_1-1)+p_1^{2k+1}}{(p_1^{k+1}-p_1^k)(p_1^s)^{k+1}p_1^k(p_1-1))} \nonumber \\
& \leq & \frac{1}{p_1^{k+1}-p_1^k} \leq \frac{1}{q^{k+1}-q^k}.
\label{eqn:imp_lemma} 
\end{eqnarray}
The last line is obtained by using the fact that $p_1 \geq q.$
Finally, we obtain that,
\begin{eqnarray}
    \psi_{k}(G)<\psi_{k}(\mathbb{Z}_{n})\bigg(\frac{\psi_{k}(Z)}{\psi_{k}(F)}+\frac{|P|}{\psi_{k}(P)}\bigg)&<&\psi_{k}(\mathbb{Z}_{n})\bigg(\frac{1}{q^k}+\frac{1}{q^{k+1}-q^k}\bigg)\nonumber\\
    &=&\psi_{k}(\mathbb{Z}_{n})\frac{1}{(q-1)q^{k-1}}<\psi_{k}(\mathbb{Z}_{n})\frac{1}{(q-1)^k},\nonumber
\end{eqnarray}
which is a contradiction. This completes the proof.
\end{proof}

The above lemma immediately gives the following corollary.

\begin{corollary}
\label{cor:odd_order_group}
Let $G$ be a non-cyclic finite group of odd order $n$. Then, we have
$$\psi_k(G) < \displaystyle  \frac{1}{2^k} \psi_k(\mathbb{Z}_n).$$ 
\end{corollary}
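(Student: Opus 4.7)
The plan is to obtain this as a direct consequence of Lemma \ref{lem:k-upperbound}. Since $n$ is odd, every prime divisor of $n$ is at least $3$, so in particular the smallest prime divisor $q$ of $n$ satisfies $q \geq 3$, which gives $q - 1 \geq 2$. Raising to the $k$-th power (with $k \geq 1$) yields $(q-1)^k \geq 2^k$, hence $\frac{1}{(q-1)^k} \leq \frac{1}{2^k}$.

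Applying Lemma \ref{lem:k-upperbound} to the non-cyclic group $G$ of (odd) order $n$ with smallest prime divisor $q \geq 3$, we get
\[
\psi_k(G) < \frac{1}{(q-1)^k} \psi_k(\mathbb{Z}_n) \leq \frac{1}{2^k} \psi_k(\mathbb{Z}_n),
\]
which is exactly the desired inequality. Since Lemma \ref{lem:k-upperbound} already provides strict inequality, the conclusion is strict as stated.

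There is essentially no obstacle here: the entire content is the numerical observation that the smallest prime dividing an odd integer is at least $3$, combined with the already-proved sharper lemma. The proof is just a one-line substitution, which is presumably why the authors labeled this a corollary rather than a theorem.
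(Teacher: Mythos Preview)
Your argument is correct and is exactly the intended one: the paper states that this corollary follows immediately from Lemma~\ref{lem:k-upperbound}, and the only content is the observation that for odd $n$ the least prime divisor $q$ satisfies $q\ge 3$, whence $\frac{1}{(q-1)^k}\le \frac{1}{2^k}$.
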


We now prove the following three inequalities which are important ingredients for the proof of Theorem \ref{thm:711generalization}.

\begin{lemma} 
\label{lem:ineqlemma_1}
Let $k$ be any positive integer and $n=2^a3^b$ for some $a,b \in \mathbb{N}.$
Then, the following inequality holds:   \begin{equation*}\psi_{k}(\mathbb{Z}_{n/2}) + \displaystyle \bigg(\frac{n}{2} \bigg) \bigg(\frac{n}{3} \bigg)^k \leq \frac{1+3.2^k}{1+2.4^k+2^k}\psi_{k}(\mathbb{Z}_n).
\end{equation*} 
 \end{lemma}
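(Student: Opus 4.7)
The plan is to apply Lemma \ref{lem:coprime} to factor $\psi_k(\mathbb{Z}_n) = \psi_k(\mathbb{Z}_{2^a}) \psi_k(\mathbb{Z}_{3^b})$ and $\psi_k(\mathbb{Z}_{n/2}) = \psi_k(\mathbb{Z}_{2^{a-1}}) \psi_k(\mathbb{Z}_{3^b})$, and to exploit the telescoping identity
$$\psi_k(\mathbb{Z}_{2^a}) - \psi_k(\mathbb{Z}_{2^{a-1}}) \;=\; \phi(2^a)\,(2^a)^k \;=\; 2^{a(k+1)-1},$$
which is immediate from Lemma \ref{lem:sumofkthpower}. After rewriting the target inequality as $C\,\psi_k(\mathbb{Z}_n) - \psi_k(\mathbb{Z}_{n/2}) - \frac{n}{2}\bigl(\frac{n}{3}\bigr)^k \ge 0$ and dividing through by the positive quantity $2^{a(k+1)-1}\,\psi_k(\mathbb{Z}_{3^b})$, it collapses to the compact form $y + (1-C)x \le C$, where
$$x = \frac{\psi_k(\mathbb{Z}_{2^{a-1}})}{2^{a(k+1)-1}}, \qquad y = \frac{3^{b(k+1)-k}}{\psi_k(\mathbb{Z}_{3^b})}, \qquad C = \frac{1+3\cdot 2^k}{1+2^k+2\cdot 4^k}.$$

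The next step is a short monotonicity check using the explicit forms from Lemma \ref{lem:sumofkthpower}: $x$ is decreasing in $a$ with maximum $1/2^k$ at $a = 1$, while $y$ is increasing in $b$ with supremum $(3^{k+1}-1)/(2\cdot 9^k)$ as $b \to \infty$. Since $C < 1$, the coefficient $1-C$ is positive, so $y + (1-C)x$ is maximised in the extremal corner $a = 1$, $b \to \infty$. Writing $u = 2^k$ and $v = 3^k$, the inequality at that corner simplifies cleanly to the two-variable statement
$$(3v - 1)(1 + u + 2u^2) \;\le\; 2v^2(u + 3).$$

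What remains is to verify this at $u = 2^k$, $v = 3^k$ for every $k \ge 1$. Expanding, it becomes $2\cdot 18^k + 6\cdot 9^k + 2\cdot 4^k + 2^k + 1 \ge 6\cdot 12^k + 3\cdot 6^k + 3^{k+1}$. For $k \ge 3$ the dominant comparison $2\cdot 18^k \ge 6\cdot 12^k$ already holds (it is equivalent to $(3/2)^k \ge 3$), after which the positive lower-order terms absorb the rest; the cases $k = 1, 2$ I would verify by direct arithmetic. The main obstacle is how tight this last inequality is at $k = 1$, where the slack is exactly $2$: any cruder step earlier in the argument -- for instance bounding $\psi_k(\mathbb{Z}_{2^{a-1}})/\psi_k(\mathbb{Z}_{2^a})$ by $1/(1+2^k)$ without simultaneously tracking the $b$-dependence -- would overshoot at $k = 1$. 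The $(x,y)$ substitution and the exact monotonicities above are what keep the argument sharp enough to succeed.
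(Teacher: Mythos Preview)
Your argument is correct. The initial factorisation via Lemma~\ref{lem:coprime} and the telescoping identity $\psi_k(\mathbb{Z}_{2^a})-\psi_k(\mathbb{Z}_{2^{a-1}})=2^{a(k+1)-1}$ are exactly what the paper uses to reach its intermediate inequality~\eqref{eqn:claim1_eq3}. From that point the two proofs diverge. The paper splits \eqref{eqn:claim1_eq3} into two separate sub-inequalities \eqref{eqn:claim1_eq4} and \eqref{eqn:claim1_eq5}, bounds each crudely for $k\ge 3$, and then treats $k=1,2$ by plugging in the closed forms from Lemma~\ref{lem:sumofkthpower} and computing. You instead normalise by $2^{a(k+1)-1}\psi_k(\mathbb{Z}_{3^b})$ to obtain the linear form $y+(1-C)x\le C$, observe that $x$ is maximised at $a=1$ and $y$ at $b\to\infty$, and reduce everything to the single two-variable inequality $(3v-1)(1+u+2u^2)\le 2v^2(u+3)$ at $u=2^k$, $v=3^k$. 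Your route is tidier and more conceptual---the monotonicity-plus-extremal-corner device replaces the paper's ad~hoc split---though both proofs ultimately fall back on direct verification for $k=1,2$ where the dominant-term comparison $2\cdot 18^k\ge 6\cdot 12^k$ fails. Your remark that the $k=1$ slack is exactly~$2$ confirms that the supremum in $b$ is not actually attained, so the lemma holds with strict inequality for every finite $a,b$; you may want to state that explicitly, and to spell out the one-line check that $6\cdot 9^k\ge 3\cdot 6^k+3^{k+1}$ for all $k\ge 1$ when you say the lower-order terms ``absorb the rest''.
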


\begin{proof} 
Setting $n=2^a3^b$, we see that is equivalent to showing

\begin{equation}
\label{eqn:claim1_eq1} 
\psi_{k}(\mathbb{Z}_{2^{a-1}})\psi_{k}(\mathbb{Z}_{3^b})+ 2^{a+ak-1}3^{b+bk-k} \leq \frac{1+3.2^k}{1+2.4^k+2^k}\psi_{k}(\mathbb{Z}_{2^a})\psi_{k}(\mathbb{Z}_{3^b}). 
\end{equation}

This boils down to showing
\begin{equation}
\label{eqn:claim1_eq2} 
2^{a+ak-1}3^{b+bk-k} \leq 
\frac{\psi_{k}(\mathbb{Z}_{3^b})}{(1+2.4^k+2^k)}\bigg[(1+3.2^k)\psi_{k}(\mathbb{Z}_{2^a})-(1+2.4^k+2^k)\psi_{k}(\mathbb{Z}_{2^{a-1}})\bigg]. 
\end{equation}

We see that there are $2^{a-1}$ elements of order $2^a$ in the group $\mathbb{Z}_{2^{a}} $ and
the remaining $2^{a-1}$ elements form the group $\mathbb{Z}_{2^{a-1}}. $ Therefore, we have 
$\psi_{k}(\mathbb{Z}_{2^a})=\psi_{k}(\mathbb{Z}_{2^{a-1}})+ 2^{ak+a-1}.$
We use this to obtain that showing \eqref{eqn:claim1_eq2} is equivalent to show 
\begin{equation}
\label{eqn:claim1_eq3} 2^{a+ak-1}3^{b+bk-k}(1+2.4^k+2^k)+\psi_{k}(\mathbb{Z}_{2^{a-1}})\psi_{k}(\mathbb{Z}_{3^b})(2.4^k-2.2^k) \leq  \psi_{k}(\mathbb{Z}_{3^b})(1+3.2^k)2^{ak+a-1}. 
\end{equation}  

For this, we will show the following two inequalities 
\begin{equation}
\label{eqn:claim1_eq4} 
2^{a+ak-1}3^{b+bk-k}(1+2.4^k+2^k)\leq \psi_{k}(\mathbb{Z}_{3^b})(1+2^k)2^{ak+a-1}
\end{equation}
and 
\begin{equation}
\label{eqn:claim1_eq5}
\psi_{k}(\mathbb{Z}_{2^{a-1}})\psi_{k}(\mathbb{Z}_{3^b})2^{k+1}(2^k-1)\leq\psi_{k}(\mathbb{Z}_{3^b})(2.2^k)2^{ak+a-1}.
\end{equation}

For all positive integers $k$, we of course have (i) $2^{k+1}(1+2^k) \geq (1+2.4^k+2^k)$ and for $ k \geq 3,$ (ii) we have $2^k \leq 3^{k-1}.$ We also note that there are $2.3^{b-1} $ elements of order $3^b$ in the group $\mathbb{Z}_{3^b}$ and hence (iii) $\psi_k(\mathbb{Z}_{3^b}) \geq 2.3^{b-1}.3^{bk}. $ Multiplying (i), (ii) and (iii), we have that for $k \geq 3$, $\eqref{eqn:claim1_eq4}$ is true. 

We also see that, $$\frac{2^{a+ak-1}}{2^k-1}\geq \frac{2^{a+ak-1}}{2^k}=2^{(a-1)(k+1)}$$ and  $$\psi_{k}(\mathbb{Z}_{2^{a-1}})\leq 2^{(a-1)(k+1)}.$$ Multiplying above two equations, we get that \eqref{eqn:claim1_eq5} is proved. 

Thus, we are able to prove that claim 1 holds for $k\geq 3$.

Inserting $k=2$ in equation \eqref{eqn:claim1_eq3}, we need to show that 

\begin{equation}
\label{eqn:claim1_eq6} 
37.2^{3a-1} 3^{3b-2}+ 24 \psi_2(\mathbb{Z}_{2^{a-1}}) \psi_2(\mathbb{Z}_{3^b}) \leq 13.2^{3a-1}  \psi_2(\mathbb{Z}_{3^b}).
\end{equation}

We have
\begin{eqnarray*}
&&\psi_{2}(\mathbb{Z}_{3^b})\bigg[13.2^{3a-1}-24.\psi_{2}(\mathbb{Z}_{2^{a-1}})\bigg]\nonumber\\
&=&\bigg[\frac{18.3^{3b}+8}{26}\bigg].\bigg[13.2^{3a-1}-24.\frac{(2^{3a-1}+3)}{7}\bigg]\nonumber\\
&=& \bigg[\frac{18.3^{3b}+8}{26}\bigg].\bigg[\frac{91.2^{3a-1}-24.2^{3a-1}-72}{7}\bigg]\nonumber\\
&=&\bigg[\frac{18.3^{3b}+8}{26}\bigg].\bigg[\frac{67.2^{3a-1}-72}{7}\bigg]\nonumber \\
& \geq & \displaystyle \frac{ 18.9 }{26} .3^{3b-2}.7.2^{3a-1} =42.2^{3a-1} 3^{3b-2}.
\end{eqnarray*}
The second line follows from Lemma \ref{lem:sumofkthpower} and the last line uses the fact that for $a\geq 1$, $3a-1\geq2$ and $18.2^{3a-1}\geq 18.4=72.$ Hence, for $k= 2,$ we are able to prove \eqref{eqn:claim1_eq6}. For $k=1$, this can either be proved
in a similar manner or this follows from the proof of \cite[Theorem 1]{Her-pureandApplied}. 

The proof is complete.
\end{proof} 

 \begin{lemma}
 \label{lem:ineqlemma2} 
 Let $k$ be any positive integer and $n=2^a3^b$ for some $a,b \in \mathbb{N}.$
 Then, we have 
  \begin{equation*} \psi_k(\mathbb{Z}_{n/3})+ \displaystyle  \frac{2n}{3}\bigg(\frac{n}{3} \bigg)^k \leq \frac{1+3.2^k}{1+2.4^k+2^k}\psi_{k}(\mathbb{Z}_n). 
  \end{equation*} 
  \end{lemma}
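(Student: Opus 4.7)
The plan is to mimic closely the proof of Lemma \ref{lem:ineqlemma_1}, since the target inequality is structurally parallel, with the roles of the primes $2$ and $3$ interchanged in the way we unravel the cyclic factor. First, by Lemma \ref{lem:coprime} I factor $\psi_k(\mathbb{Z}_n) = \psi_k(\mathbb{Z}_{2^a}) \psi_k(\mathbb{Z}_{3^b})$ and $\psi_k(\mathbb{Z}_{n/3}) = \psi_k(\mathbb{Z}_{2^a}) \psi_k(\mathbb{Z}_{3^{b-1}})$ (so I tacitly assume $b \geq 1$). Next, using the identity $\psi_k(\mathbb{Z}_{3^b}) = \psi_k(\mathbb{Z}_{3^{b-1}}) + 2 \cdot 3^{bk+b-1}$, obtained by summing the contribution $(3^b)^k$ over the $\phi(3^b) = 2 \cdot 3^{b-1}$ generators of $\mathbb{Z}_{3^b}$, multiplying the target inequality through by $(1 + 2\cdot 4^k + 2^k)$, and rearranging, the claim reduces to
\begin{equation*}
2^k(2^k - 1) \psi_k(\mathbb{Z}_{2^a}) \psi_k(\mathbb{Z}_{3^{b-1}}) + (1 + 2\cdot 4^k + 2^k)\, 2^{ak+a}\, 3^{bk+b-k-1} \leq (1 + 3\cdot 2^k)\, \psi_k(\mathbb{Z}_{2^a})\, 3^{bk+b-1}.
\end{equation*}

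For $k \geq 2$, I apply two simple estimates: $\psi_k(\mathbb{Z}_{2^a}) \geq 2^{ak+a-1}$, coming from the generators of $\mathbb{Z}_{2^a}$, which yields $2^{ak+a} \leq 2\,\psi_k(\mathbb{Z}_{2^a})$; and $\psi_k(\mathbb{Z}_{3^{b-1}}) \leq 3^{(b-1)(k+1)}$, since every element of $\mathbb{Z}_{3^{b-1}}$ has order at most $3^{b-1}$. Substituting both bounds on the left-hand side pulls out a common factor $\psi_k(\mathbb{Z}_{2^a}) \cdot 3^{bk+b-k-1}$, and the task reduces to the purely numerical inequality $5 \cdot 4^k + 2^k + 2 \leq 3^k + 3 \cdot 6^k$. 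This is easy: $3 \cdot 6^k - 5 \cdot 4^k = 6^k\bigl[3 - 5(2/3)^k\bigr]$ is increasing in $k$ and already equals $28$ at $k = 2$, while $3^k - 2^k \geq 0$, so the difference of the two sides is at least $28 > 2$ for every $k \geq 2$.

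The main obstacle is $k = 1$, where the crude numerical reduction above actually breaks ($5\cdot 4 + 2 + 2 = 24$ while $3 + 18 = 21$), so one cannot avoid using the tight formulas. For $k = 1$, I would substitute the explicit closed forms $\psi(\mathbb{Z}_{2^a}) = (2\cdot 4^a + 1)/3$ and $\psi(\mathbb{Z}_{3^{b-1}}) = (3 \cdot 9^{b-1} + 1)/4$ from Lemma \ref{lem:sumofkthpower} directly into the reduced inequality, clear the denominators (multiplying through by $6$), and expand. After collecting terms, the inequality collapses to $(12 \cdot 4^a + 39)\,9^{b-1} \geq 2 \cdot 4^a + 1$, which is trivially true for every $a \geq 0$ and $b \geq 1$. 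This explicit-computation route exactly parallels what the authors do for $k = 2$ in Lemma \ref{lem:ineqlemma_1}; alternatively, for $k = 1$ the conclusion can be extracted from the proof of Theorem $1$ in \cite{Her-pureandApplied}, as the authors themselves remark at the end of that lemma.
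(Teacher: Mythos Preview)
Your proof is correct and follows the same overall strategy as the paper: factor via Lemma~\ref{lem:coprime}, use $\psi_k(\mathbb{Z}_{3^b})=\psi_k(\mathbb{Z}_{3^{b-1}})+2\cdot 3^{bk+b-1}$ to arrive (after dividing through by $2$) at the paper's reduced inequality~\eqref{eqn:claim2_eq3}, and then apply the two estimates $\psi_k(\mathbb{Z}_{2^a})\geq 2^{ak+a-1}$ and $\psi_k(\mathbb{Z}_{3^{b-1}})\leq 3^{(b-1)(k+1)}$. The only difference is in the bookkeeping at the final step: you collapse everything into the single numerical inequality $5\cdot 4^k+2^k+2\leq 3^k+3\cdot 6^k$, which holds for all $k\geq 2$, whereas the paper splits the right-hand side as $(1+2^k)+2\cdot 2^k$ and matches the two left-hand terms against these pieces separately (inequalities~\eqref{eqn:claim2_eq4} and~\eqref{eqn:claim2_eq5}); that pairing only closes for $k\geq 4$, forcing the paper to verify $k=2$ and $k=3$ by explicit computation. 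Your organization is therefore slightly more economical, but the ingredients are identical; both versions treat $k=1$ by direct substitution of the closed forms from Lemma~\ref{lem:sumofkthpower} (or by deferring to \cite{Her-pureandApplied}).
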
 
 
 \begin{proof} The proof goes in a similar way to that of Lemma \ref{lem:ineqlemma_1}. But we provide the details for the sake of completeness. We set $n=2^a3^b$ and, it is now equivalent to showing

\begin{equation}
\label{eqn:claim2_eq1} 
\psi_{k}(\mathbb{Z}_{2^{a}})\psi_{k}(\mathbb{Z}_{3^{b-1}})+ 2^{a+ak+1}3^{b+bk-k-1} \leq \frac{1+3.2^k}{1+2.4^k+2^k}\psi_{k}(\mathbb{Z}_{2^a})\psi_{k}(\mathbb{Z}_{3^b}). 
\end{equation}

This boils down to showing
\begin{equation}
\label{eqn:claim2_eq2} 
2^{a+ak+1}3^{b+bk-k-1} \leq 
\frac{\psi_{k}(\mathbb{Z}_{2^a})}{(1+2.4^k+2^k)}\bigg[(1+3.2^k)\psi_{k}(\mathbb{Z}_{3^b})-(1+2.4^k+2^k)\psi_{k}(\mathbb{Z}_{3^{b-1}})\bigg]. 
\end{equation}

Here we see that there are $2.3^{b-1}$ elements of order $3^b$ in the group $\mathbb{Z}_{3^{b}} $ and
the remaining $3^{b-1}$ elements form the group $\mathbb{Z}_{3^{b-1}}. $ Therefore, we have 
$\psi_{k}(\mathbb{Z}_{3^b})=\psi_{k}(\mathbb{Z}_{3^{b-1}})+ 2.3^{bk+b-1}.$
We use this to obtain that showing \eqref{eqn:claim2_eq2} is equivalent to show 
\begin{equation}
\label{eqn:claim2_eq3} 2^{a+ak+1}3^{b+bk-k-1}(1+2.4^k+2^k)+\psi_{k}(\mathbb{Z}_{2^{a}})\psi_{k}(\mathbb{Z}_{3^{b-1}})(2.4^k-2.2^k) \leq  \psi_{k}(\mathbb{Z}_{2^a})(1+3.2^k)2.3^{bk+b-1}. 
\end{equation}  

For this, we will show the following two inequalities 
\begin{equation}
\label{eqn:claim2_eq4} 
2^{a+ak+1}3^{b+bk-k-1}(1+2.4^k+2^k)\leq \psi_{k}(\mathbb{Z}_{2^a})(1+2^k)2.3^{bk+b-1} 
\end{equation}
and 
\begin{equation}
\label{eqn:claim2_eq5}
\psi_{k}(\mathbb{Z}_{2^{a}})\psi_{k}(\mathbb{Z}_{3^{b-1}})2^{k+1}(2^k-1)\leq\psi_{k}(\mathbb{Z}_{2^a})(2.2^k)2.3^{bk+b-1}.
\end{equation}

For all positive integers $k$, we of course have (i) $2^{k+1}(1+2^k) \geq (1+2.4^k+2^k)$ and for $ k \geq 4,$ (ii) we have $2^{k+2}  \leq 3^{k}.$ We also note that there are $2^{a-1} $ elements of order $2^a$ in the group $\mathbb{Z}_{2^a}$ and hence (iii) $\psi_k(\mathbb{Z}_{2^a}) \geq 2^{a-1}. $ Multiplying (i), (ii) and (iii), we have that for $k \geq 4$, $\eqref{eqn:claim2_eq4}$ is true. 

Moreover, we ofcourse have $\psi_k(\mathbb{Z}_{3^{b-1}}) \leq 3^{b-1}.3^{(b-1)k} $ and this directly proves 
 \eqref{eqn:claim2_eq5}.

Thus, we are able to prove that claim 2 holds for $k\geq 4$.
We now prove for $k=3$ and $k=2$ separately. 

Inserting $k=3$ in equation \eqref{eqn:claim2_eq3}, we need to show that 

\begin{equation}
\label{eqn:claim2_eq6} 
137.2^{4a+1} 3^{4b-4}+ 120 \psi_3(\mathbb{Z}_{2^{a}}) \psi_3(\mathbb{Z}_{3^{b-1}}) \leq 25.2.3^{4b-1}  \psi_3(\mathbb{Z}_{2^a}) .
\end{equation}

We now have
\begin{eqnarray*}
&&\psi_{3}(\mathbb{Z}_{2^a})\bigg[50.3^{4b-1}-120.\psi_{3}(\mathbb{Z}_{3^{b-1}})\bigg]\nonumber\\
& \geq & 2^{4a+1} \bigg[50.3^{4b-1}-120.\frac{2.3^{4b-1}+26}{80}\bigg]\nonumber\\ 
&=& 2^{4a+1} \bigg[\frac{4000.3^{4b-1}-240.3^{4b-1}-3120}{80} \bigg] \nonumber\\
& \geq & 2^{4a+1}3^{4b-4} .27.10  \\ 
& > & 137. 2^{4a+1}.3^{4b-4}.
\end{eqnarray*}
The second line follows from Lemma \ref{lem:sumofkthpower} and the fourth line uses the fact that for $b\geq 1$, $4b-1\geq3$ and $150.3^{4b-1} \geq 150.27> 3120$. Hence, for $k= 3$ we are done.

Inserting $k=2$ in equation \eqref{eqn:claim2_eq3}, we need to show that 

\begin{equation}
\label{eqn:claim2_eq7} 
37.2^{3a+1} 3^{3b-3}+ 24 \psi_2(\mathbb{Z}_{2^{a}}) \psi_2(\mathbb{Z}_{3^{b-1}}) \leq 13.2.3^{3b-1}  \psi_2(\mathbb{Z}_{2^a}). 
\end{equation}

We now have
\begin{eqnarray*}
&&\psi_{2}(\mathbb{Z}_{2^a})\bigg[26.3^{3b-1}-24.\psi_{2}(\mathbb{Z}_{3^{b-1}})\bigg]\nonumber\\
& \geq & 2^{3a-1} \bigg[26.3^{3b-1}-24.\frac{18.3^{3b-3}+8}{26}\bigg]\nonumber\\ 
&=& 2^{3a-1} \bigg[\frac{676.3^{3b-1}- 48.3^{3b-1}-192}{26} \bigg] \nonumber\\
& \geq & 2^{3a-1} 24.9.3^{3b-3} \\ 
& > & 37. 2^{3a+1}3^{3b-3}.
\end{eqnarray*}
The second line follows from Lemma \ref{lem:sumofkthpower} and the last line uses the fact that for $b\geq 1$, $3b-1\geq2$ and $22.3^{3b-1}\geq 192.$. Hence, for $k= 2$ we are done. For $k=1$, this can be proved
in a similar manner or this follows from the proof of \cite[Theorem 1]{Her-pureandApplied}. 

This completes the proof.
\end{proof} 

\begin{lemma}
\label{lem:ineq3lemma} 
For primes $p>3$ and $q \geq 2$, we have $$\bigg( \frac{1+2.4^k+2^k}{1+3.2^k} \bigg)^{1/k}\frac{(1+p+\dots+p^k)^{1/k}}{q}  < p.$$
\end{lemma}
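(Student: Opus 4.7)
The plan is to reduce the stated inequality to an elementary polynomial inequality in $p$ and $k$ by three successive bounding steps. First, I would raise both sides of the desired inequality to the $k$-th power; since both sides are positive this is equivalent to showing
\[
(1+2\cdot 4^k + 2^k)(1+p+\cdots+p^k) < p^k q^k (1+3\cdot 2^k).
\]
Because $q \geq 2$, we have $q^k \geq 2^k$, so it suffices to establish
\[
(1+2\cdot 4^k + 2^k)(1+p+\cdots+p^k) \leq p^k(2^k + 3\cdot 4^k).
\]

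Next, I would replace $1+p+\cdots+p^k = \tfrac{p^{k+1}-1}{p-1}$ by the strict upper bound $\tfrac{p^{k+1}}{p-1}$, which supplies the strict inequality needed in the final conclusion. Dividing by $p^k$, the problem reduces to proving
\[
\frac{p}{p-1}\bigl(1+2\cdot 4^k + 2^k\bigr) \leq 2^k + 3\cdot 4^k.
\]
Now I would use the hypothesis that $p$ is a prime greater than $3$, so $p \geq 5$ and hence $\tfrac{p}{p-1} \leq \tfrac{5}{4}$. Thus it is enough to verify
\[
5\bigl(1+2\cdot 4^k+2^k\bigr) \leq 4\bigl(2^k + 3\cdot 4^k\bigr),
\]
which simplifies after expansion to the very clean inequality $5 + 2^k \leq 2\cdot 4^k$.

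Finally, I would dispose of this last inequality by writing $2\cdot 4^k - 2^k = 2^k(2\cdot 2^k - 1)$ and noting that for all $k \geq 1$ the factor $2\cdot 2^k - 1 \geq 3$, giving $2^k(2\cdot 2^k-1) \geq 6 > 5$. Since every step above either preserves or introduces the required strict inequality, assembling them yields the claim. There is no real obstacle in this argument; the only thing one must be careful about is ensuring that the strictness is accounted for (the passage from $\tfrac{p^{k+1}-1}{p-1}$ to $\tfrac{p^{k+1}}{p-1}$ provides it), and that the reduction uses $p \geq 5$ which is guaranteed because $p$ is a prime strictly greater than $3$.
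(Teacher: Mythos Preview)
Your proof is correct and follows essentially the same route as the paper: raise to the $k$-th power, use $q\ge 2$ to replace $q^k$ by $2^k$, bound the geometric sum $1+p+\cdots+p^k$ above by $\tfrac{p^{k+1}}{p-1}$, and then exploit $p\ge 5$. The only cosmetic difference is in the endgame---the paper isolates $p-1$ and bounds the resulting fraction by $3$, whereas you use $\tfrac{p}{p-1}\le\tfrac{5}{4}$ and reduce to $5+2^k\le 2\cdot 4^k$; your version has the mild advantage of working cleanly and uniformly for every $k\ge 1$.
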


\begin{proof}
We need to show 
\begin{equation*}
    \label{eqn:claim3_eqn1} 
  \bigg( \frac{1+2.4^k+2^k}{1+3.2^k} \bigg)^{1/k}\frac{(p^{k+1}-1)^{1/k}}{q(p-1)^{1/k} }  < p  
\end{equation*}
As $q \geq 2$, it is enough to show the following 
\begin{equation*}
    \label{eqn:claim3_eqn2} 
  \bigg( \frac{1+2.4^k+2^k}{1+3.2^k} \bigg)^{1/k}\frac{p^{1/k}}{(p-1)^{1/k} }  < 2 . 
\end{equation*}
Taking $k$-th power on both sides and cross multiplying, this is equivalent to show the following: 
\begin{equation*}
    \label{eqn:claim3_eqn3} 
 p(1+2.4^k+2^k) < 2^k(1+3.2^k)(p-1).  
\end{equation*}
After some calculations, this boils down to show
\begin{equation}
    \label{eqn:claim3_eqn4} 
 \frac{2^{2k+1}+1+2^k} {2^{2k}-1} < p-1  .
\end{equation}
As $k \geq 1$, we have $2{k}+3 \leq 2^{2k}-1$ and thus $\displaystyle\frac{2^{2k+1}+1+2^k} {2^{2k}-1} < 3 $. The proof of \eqref{eqn:claim3_eqn4} is now complete as $p\geq 5.$

This completes the proof.
\end{proof}

We are now in a position to prove the main result of this paper. 

\section{Proof of Theorem \ref{thm:711generalization} } 

\begin{proof}[Proof of Theorem \ref{thm:711generalization}] 

Let $G$ be a finite non-cyclic group of order $n$ satisfying
\begin{equation}
    \psi_{k}(G)> \displaystyle \frac{1+3.2^k}{1+2.4^k+2^k}\psi_{k}(\mathbb{Z}_{n}). \label{eqn:Ggreater711Zn} 
\end{equation}
Let $q=p_1 < p_2 < \dots < p_t = p$ be the prime divisors of $n$. Moreover, we denote the corresponding Sylow subgroups of $\mathbb{Z}_n$ by $P_1, P_2, \dots, P_t.$ We proceed by induction on the size of $p$. 

By Lemma \ref{lem:lowerbound}, we have  $$\psi_{k}(G)>\displaystyle \frac{1+3.2^k}{1+2.4^k+2^k}\psi_{k}(\mathbb{Z}_{n})\geq \displaystyle  \frac{1+3.2^k}{1+2.4^k+2^k}\frac{q^k}{(p+1)^k}n^{k+1}.$$ This holds since $p$ is the largest prime and $q$ is the smallest prime dividing $n$ respectively. Thus there exists an element $x$ in $G$ such that $$o(x)> \bigg(\frac{1+3.2^k}{1+2.4^k+2^k} \bigg)^{1/k}\frac{q}{p+1}n.$$

{\underline {\it Case 1: $p=2$} } 

\medskip

We first assume that $p=2$. In this case $q$ must also equal 2. Then
\begin{equation*}
[G:\langle x \rangle ]<\frac{3}{2}\bigg(\frac{1+2.4^k+2^k}{1+3.2^k}\bigg)^{1/k}<\frac{3}{2} \bigg (\frac{3.4^k}{3.2^k} \bigg)^{1/k}=\frac{3}{2}.2=3 
\end{equation*}
and therefore $[G:\langle x \rangle ]=2$; Thus, $n\geq 4$. So,

\begin{eqnarray}
 \psi_{k}(G) & \leq & \psi_{k}(\mathbb{Z}_{n/2})+\bigg(\frac{n}{2} \bigg) \bigg(\frac{n}{2} \bigg)^k\nonumber\\
&= & \frac{\bigg(\displaystyle\frac{n}{2}\bigg)^{k+1}.2^k+2^k-1}{2^{k+1}-1}+\bigg(\frac{n}{2}\bigg)^{k+1}\nonumber\\
&= & n^{k+1} \bigg[\frac{1}{2(2^{k+1}-1)}+\frac{1}{2^{k+1}} \bigg]+\frac{2^k-1}{2^{k+1}-1}.\nonumber
\end{eqnarray}
The first line follows from Theorem \ref{thm:Amiri_kth_gen}. The second line follows by using Lemma \ref{lem:sumofkthpower} and the fact that as $n$ is a power of $2$, hence $\mathbb{Z}_{n/2}$ is a cyclic group of prime power. 

We also have  

\begin{eqnarray}
   &  &\frac{1+3.2^k}{1+2.4^k+2^k}\bigg(\frac{2^kn^{k+1}+2^k-1}{2^{k+1}-1}\bigg)-n^{k+1}\bigg[\frac{1}{2(2^{k+1}-1)}+\frac{1}{2^{k+1}} \bigg]-\frac{2^k-1}{2^{k+1}-1}\nonumber\\
    &= & n^{k+1} \bigg [\frac{2^k+3.4^k}{(2^{k+1}-1)(1+2.4^k+2^k)}-\frac{1}{2(2^{k+1}-1)}-\frac{1}{2^{k+1}} \bigg ]+\frac{2^k-1}{2^{k+1}-1} \bigg (\frac{1+3.2^k}{1+2.4^k+2^k}-1 \bigg )\nonumber\\
    &= &\frac {n^{k+1}\bigg({2^k.2^{k+1}+3.4^k2^{k+1}-2^k(1+2.4^k+2^k)-(2^{k+1}-1)(1+2.4^k+2^k)} \bigg) \nonumber  }{(2^{k+1}-1)2^{k+1}(1+2^k+2.4^k)} \\ & & +\frac{(2^k-1)(2.2^k-2.4^k)}{(2^{k+1}-1)(1+4^k.2+2^k)}\nonumber\\
    &= &\frac{n^{k+1}(1-2^{k+1}+4^k)}{(2^{k+1}-1)(1+4^k.2+2^k)2^{k+1}}+\frac{(2^k-1)(2.2^k-2.4^k)}{(2^{k+1}-1)(1+4^k.2+2^k)}\nonumber\\
    &= & \frac{n^{k+1}(1-2^{k+1}+4^k)-(2.4^k-2^{k+1})^2}{(2^{k+1}-1)(1+4^k.2+2^k)2^{k+1}}\nonumber\\
    &\geq & \frac{4^{k+1}(1-2^{k+1}+4^k)-(2.4^k-2^{k+1})^2}{(2^{k+1}-1)(1+4^k.2+2^k)2^{k+1}}( \mbox{Since, } \hspace{.4 mm} n\geq 4, n^{k+1}\geq 4^{k+1})\nonumber\\
    &= & 0.\nonumber
    \end{eqnarray}
    Thus, we have
    
    $$ \psi_k(G) \leq  \frac{1+3.2^k}{1+2.4^k+2^k}\bigg(\frac{2^kn^{k+1}+2^k-1}{2^{k+1}-1} \bigg)=\frac{1+3.2^k}{1+2.4^k+2^k}\psi_{k}(\mathbb{Z}_{n}),  $$ which is a contradiction to \eqref{eqn:Ggreater711Zn}.
    
    \bigskip
    
    {\underline {\it Case 2: $p=3$}}  
    
    \medskip
    
    We next assume $p=3$. As $p=3$, we must have $q=3$ or $q=2.$ If $q=3$, we have that $G$ is a $3$-group and hence by Lemma \ref{lem:k-upperbound}, we have  $$ \psi_{k}(G)<\frac{1}{2^k}\psi_{k}(\mathbb{Z}_{n})<\frac{1+3.2^k}{1+2.4^k+2^k}\psi_{k}(\mathbb{Z}_{n}),$$ which is again a contradiction. Thus, 
we may assume $q=2$, in which case we have $n=2^a3^b$ for some positive integers $a,b$. Here we have 

\begin{equation}
[G: \langle x \rangle ]<\frac{4}{2}\bigg (\frac{1+2.4^k+2^k}{1+3.2^k} \bigg)^{1/k} < 2\cdot\bigg (\frac{4^k}{2^k} \bigg )^{1/k}=4. 
\end{equation}
Hence, $[G: \langle x \rangle ]=2$ or $3$.
\medskip


\medskip

{\underline { \it Subcase 2a: $[G: \langle x \rangle ]=2$ } } 

\medskip

If $[G: \langle x \rangle ]=2$, then $\langle x \rangle $ contains a cyclic Sylow 3-subgroup $P$ of $G$ and since $\langle x \rangle \leq C_{G}(P)$, and hence, $P\vartriangleleft G$.

If there exist $y\in G\setminus \langle x \rangle $ with $[G: \langle y \rangle ]=2$ then $y\in C_{G}(P)$ and hence $P\leq Z(G)$. So, $G=P\times Q$, where $Q$ is non-cyclic Sylow 2-subgroup of $G$. Now from the case $p=2$ it follows that,
\begin{equation*}
\psi_{k}(G)=\displaystyle \psi_{k}(P)\psi_{k}(Q)\leq \psi_k(P)  \frac{1+3.2^k}{1+2.4^k+2^k}\psi_{k}(\mathbb{Z}_{|Q|})= \displaystyle \frac{1+3.2^k}{1+2.4^k+2^k}\psi_{k}(\mathbb{Z}_{n}),
\end{equation*} 
 which is a contradiction.
 So, now assume that $o(y)\leq \frac{n}{3}$ for all $y\in G\setminus \langle x \rangle $. Therefore, we have 
 $$ \psi_k(G) \leq   \psi_{k}(\mathbb{Z}_{n/2}) + \displaystyle \bigg(\frac{n}{2} \bigg) \bigg(\frac{n}{3} \bigg)^k.$$

By Lemma \ref{lem:ineqlemma_1}, we now have

$$ \psi_k(G) \leq  \frac{1+3.2^k}{1+2.4^k+2^k}\psi_{k}(\mathbb{Z}_{2^a})\psi_{k}(\mathbb{Z}_{3^b}),$$
which again contradicts \eqref{eqn:Ggreater711Zn}.

\medskip

{\underline { \it Subcase 2b: $[G: \langle x \rangle ]=3$ } }

\medskip

Next we consider the case when $p=3$ and $[G: \langle x \rangle ]=3$. Also as per the previous arguments we assume that there is no element of $G$ of order $\frac{n}{2}$ and hence, $o(y)\leq \frac{n}{3}$ for all $y\in G$. Thus, we obtain 
\begin{eqnarray}
   \psi_{k}(G) & \leq & \psi_{k}(\mathbb{Z}_{2^a})\psi_{k}(\mathbb{Z}_{3^{b-1}})+\bigg(\frac{2n}{3}\bigg)\bigg(\frac{n}{3}\bigg)^{k}\nonumber\\
  & \leq &   \frac{1+3.2^k}{1+2.4^k+2^k}\psi_{k}(\mathbb{Z}_{2^a})\psi_{k}(\mathbb{Z}_{3^b}).
 \nonumber
 \end{eqnarray}
 The second line follows by using Lemma \ref{lem:ineqlemma2}.
 This again contradicts \eqref{eqn:Ggreater711Zn}.

\medskip
 
 Hence, the theorem holds for $p\leq 3$ and now we assume that $p>3$. 
 
{\underline  {\it Case 3: $p>3.$}}  

\medskip 
 
By Lemma \ref{lem:lowerbound}, we have 
\begin{equation*}
    [G: \langle x \rangle ]< \displaystyle \bigg( \frac{1+2.4^k+2^k}{1+3.2^k} \bigg)^{1/k}\frac{(1+p+\dots+p^k)^{1/k}}{q} .
\end{equation*} 
As $p>3$, we have  $[G: \langle x \rangle ]<p$ by Lemma \ref{lem:ineq3lemma}.
Thus $\langle x \rangle $ contains a cyclic Sylow $p$-subgroup $P$ of $G$. Since, $\langle x \rangle \leq N_{G}(P)$, it follows that $P$ is cyclic and also $P\vartriangleleft G$. So using Lemma \ref{lem:analogue_cia}, we have 

\begin{equation*}
    \psi_{k}(P)\psi_{k}(G/P)\geq \psi_{k}(G)> \displaystyle \frac{1+3.2^k}{1+2.4^k+2^k}\psi_{k}(\mathbb{Z}_{p^r})\psi_{k}(\mathbb{Z}_{n/p^r}),
\end{equation*}
where, $|P|=p^r$. Now, $P\simeq \mathbb{Z}_{p^r}$ and by cancellation we obtain that,
\begin{equation*}
    \psi_{k}(G/P)> \displaystyle \frac{1+3.2^k}{1+2.4^k+2^k}\psi_{k}(\mathbb{Z}_{n/p^r}). 
\end{equation*}

Since $p$ is the largest prime dividing $n$, the maximal prime dividing $\displaystyle\frac{n}{p^r}$ is smaller than $p$. Using
Remark \ref{rem:semidirect_product_analog}, by our induction hypothesis $G/P$ is cyclic and $G= P\rtimes F$, with $F\cong G/P$ and $F\neq 1$. Now $n=|P||F|$, with $P$ and $F$ are cyclic and $(|P|,|F|)=1$. So,  $\psi_{k}(\mathbb{Z}_{n})=\psi_{k}(P)\psi_{k}(F)$.
\medskip

If $C_{F}(P)=F$, then $G=P\times F$ and $G$ is cyclic, which is a contradiction. So assume that $C_{F}(P)=Z<F$.
\begin{equation*}
    \psi_{k}(G)<\psi_{k}(P)\psi_{k}(F) \displaystyle \bigg(\frac{\psi_{k}(Z)}{\psi_{k}(F)}+\frac{|P|}{\psi_{k}(P)}\bigg)=\psi_{k}(\mathbb{Z}_{n})\bigg(\frac{\psi_{k}(Z)}{\psi_{k}(F)}+\frac{|P|}{\psi_{k}(P)}\bigg) .
\end{equation*}

Now since, $P$ is cyclic $p$-group and $p>3$, we have
\begin{eqnarray}
 & &\frac{|P|}{\psi_{k}(P)}\nonumber\\
 &=&\frac{|P|(p^{k+1}-1)}{|P|^{k+1}p^{k}(p-1)+(p^k-1)}\nonumber\\
 &<&\frac{|P|(p^k+p^{k-1}+\cdots+1)}{|P|^{k+1}p^{k}}\nonumber\\
 &\leq& \frac{(p^k+\cdots+1)}{p^{2k}}\nonumber\\
 &=&\frac{1}{p^k}+\frac{1}{p^{k+1}}+\cdots+\frac{1}{p^{2k}} \nonumber\\
 &\leq& \frac{1}{5^k}+\frac{1}{5^{k+1}}+\cdots+\frac{1}{5^{2k}}\nonumber\\
 &=&\frac{5^{k+1}-1}{4.5^{2k}}<\frac{5^{k+1}-1}{4.(5^{2k}-5^{k-1})}=\frac{5^{k+1}-1}{4.5^{k-1}(5^{k+1}-1)}=\frac{1}{4.5^{k-1}} .\label{eqn:last1}
\end{eqnarray}
Note that $Z$ is a proper subgroup of the cyclic group $F$ and $\psi_{k}(F)$ is a product of $\psi_{k}(S)$, with $S$ running over the Sylow subgroups of $F$. Moreover, $\psi_{k}(Z)$ is a similar product,  and at least one Sylow subgroup of $Z$, say Sylow $d$-subgroup $R_{Z}$, is properly contained in the Sylow $d$-subgroup $R_{F}$ of $F$ of order $d^s$ for some prime $d$.
By an argument, similar to the proof of \eqref{eqn:imp_lemma}, we have 
\begin{equation}
    \frac{\psi_{k}(Z)}{\psi_{k}(F)}\leq \frac{(d^{s-1})^{k+1}d^k(d-1)+(d^k-1)}{(d^s)^{k+1}d^k(d-1)+(d^k-1)}\leq \frac{d-1}{d^{k+1}-d^k}\leq \frac{1}{2^{k+1}-1}\bigg(\leq \frac{1+3.2^k}{1+2.4^k+2^k}\bigg)
    \label{eqn:last2} 
\end{equation}
 This is true since $d \geq 2.$
\medskip

Now combining these, we obtain,
\begin{eqnarray*}
    \psi_{k}(G)& \leq & \psi_{k}(\mathbb{Z}_{n}) \bigg(\frac{\psi_{k}(Z)}{\psi_{k}(F)}+\frac{|P|}{\psi_{k}(P)} \bigg) \\ &\leq & \psi_{k}(\mathbb{Z}_{n})\bigg(\frac{1}{4.5^{k-1}}+\frac{1}{2^{k+1}-1} \bigg) \\ 
    & \leq & \psi_{k}(\mathbb{Z}_{n})\bigg(\frac{1}{4.4^{k-1}}+\frac{1}{2^{k+1}-1} \bigg) \\
    & < &\psi_{k}(\mathbb{Z}_{n})\bigg(\frac{1+2^k}{1+2.4^k+2^k} +\frac{2.2^k}{1+2.4^k+2^k} \bigg) \\  & < & \psi_{k}(\mathbb{Z}_{n})\frac{1+3.2^k}{1+2.4^k+2^k},
\end{eqnarray*}
where the second line follows from \eqref{eqn:last1} and \eqref{eqn:last2}. The fourth line follows from some easy inequality computations and hence we omit that. Thus we obtain contradiction for all primes.

This completes the proof of the theorem. 
\end{proof} 

We finally give an example to show that the bound in Theorem \ref{thm:711generalization} is the best possible upper bound.

\begin{example}
Let $t$ be an odd positive integer and $n=4t.$ Then, by Lemma \ref{lem:analogue_cia}, we have $$\psi_k(\mathbb{Z}_n)=(1+2^k+2.4^k)\psi_k(\mathbb{Z}_t).$$ Moreover,  by Lemma \ref{lem:analogue_cia}, we also have $$\psi_k(\mathbb{Z}_{t} \times \mathbb{Z}_2 \times \mathbb{Z}_2)=(1+3.2^k)\psi_k(\mathbb{Z}_t).$$
Therefore, we have 
$$ \psi_k(\mathbb{Z}_{t} \times \mathbb{Z}_2 \times \mathbb{Z}_2)= \frac {1+3.2^k}{1+2^k+2.4^k} \psi_k(\mathbb{Z}_n).$$
\end{example}

\section*{Acknowledgements}
The first author thanks Prof.
Arvind Ayyer for his constant support and encouragement.
The first author acknowledges SERB-National Post Doctoral Fellowship (File No. PDF/2021/001899) during the preparation of this work and profusely thanks Science and Engineering Research Board, Govt. of India for this funding. The first author also acknowledges excellent working conditions in the Department of Mathematics, Indian Institute of Science.
\medskip

The second author acknowledges IIT Bombay TA fellowship  for supporting her financially during this work. Also, she would like to thank Buddhadev Hajra for some helpful discussions.

\end{document}